\documentclass[12pt,reqno]{amsart}
\usepackage{amssymb}
\usepackage{mathtools}
\usepackage[english]{babel}
\usepackage{epsfig}
\setlength{\textheight}{20cm} \textwidth16cm \hoffset=-2truecm

\usepackage{mathptmx}
\usepackage{xcolor}
\usepackage{amsmath,amsfonts,amssymb}
\usepackage{mathtools}
\usepackage{mathrsfs}
\usepackage[all]{xy}
\usepackage{graphicx}
\usepackage{latexsym}
\usepackage{verbatim}

\numberwithin{equation}{section}

\def\Re{{\sf Re}\,}
\def\Im{{\sf Im}\,}

\newcommand{\D}{\mathbb D}

\newcommand{\R}{\mathbb R}

\newcommand{\C}{\mathbb C}

\newcommand{\G}{\mathcal{G}}

\newcommand{\B}{\mathbb B}

\newcommand{\oD}{\overline{\mathbb D}}

\def\Re{{\sf Re}\,}
\def\Im{{\sf Im}\,}







\def\id{{\sf id}}

\def\Re{{\sf Re}\,}
\def\Im{{\sf Im}\,}

\def\re{\operatorname{Re}}

\def\Re{{\sf Re}\,}
\def\Im{{\sf Im}\,}

\def\1#1{\overline{#1}}
\def\2#1{\widetilde{#1}}
\def\3#1{\widehat{#1}}
\def\4#1{\mathbb{#1}}
\def\5#1{\frak{#1}}
\def\6#1{{\mathcal{#1}}}
\def\C{\mathbb{C}}
\def\R{\mathbb{R}}
\def\E{\mathbb{E}}

\def\D{\mathbb{D}}
\def\B{\mathbb{B}}

\def\G{\mathbb{G}}

\def\Tt{\mathcal{T}}
\def\Hh{\mathcal{H}}
\def\Oo{\mathcal{O}}
\def\Mm{\mathcal{M}}

\def\id{\operatorname{id}}

\def\re{\operatorname{Re}}

\def\ball{\operatorname{ball}}

\def\Re{{\sf Re}\,}
\def\Im{{\sf Im}\,}

\def\m{\mathcal}
\def\mb{\mathbb}

\newcommand{\mcite}[1]{\csname b@#1\endcsname}

\theoremstyle{theorem}

\setcounter {result} {64}






\def\id{{\sf id}}

\def\Re{{\sf Re}\,}
\def\Im{{\sf Im}\,}

\def\Fix{\operatorname{Fix}}
\def\Deck{\operatorname{Deck}}

\emergencystretch15pt \frenchspacing

\newtheorem{theorem}{Theorem}[section]
\newtheorem{lemma}[theorem]{Lemma}
\newtheorem{proposition}[theorem]{Proposition}
\newtheorem{corollary}[theorem]{Corollary}

\theoremstyle{definition}
\newtheorem{definition}[theorem]{Definition}
\newtheorem{example}[theorem]{Example}

\theoremstyle{remark}
\newtheorem{remark}[theorem]{Remark}

\numberwithin{equation}{section}

\title{Holomorphic retracts in the Lie ball and the tetrablock}

\author[G. Ghosh]{Gargi Ghosh}\email{gargi.ghosh@uj.edu.pl}\author[W. Zwonek]{W\l odzimierz Zwonek}\email{wlodzimierz.zwonek@uj.edu.pl}

\address{Institute of Mathematics, Faculty of Mathematics and Computer Science, Jagiellonian University, \L ojasiewicza 6, 30-348 Krak\'ow, Poland}




\long\def\REM#1{\relax}

\begin{document}
\maketitle

\selectlanguage{english}
\begin{abstract}
 In this article, we study various properties of holomorphic retracts in Lempert domains. We associate the existence and the related form of holomorphic retracts with the linear ones, provide non-trivial examples and discuss their properties in a quite general setting. Later we specialize on two Lempert domains which are the Lie ball of dimension three and its $2$-proper holomorphic image, that is, the tetrablock and give a complete description of holomorphic retracts in these domains. 
\end{abstract}
\section{Introduction}
\footnote{This work is supported by the project No. 2022/45/P/ST1/01028 co-funded by the
National Science Centre and the European Union Framework Programme for Research and Innovation Horizon 2020 under the Marie Sklodowska-Curie grant agreement No. 945339. For the purpose of Open Access, the author has applied a CC-BY public
copyright licence to any Author Accepted Manuscript (AAM) version arising from this
submission.}
Let $D \subseteq \mb C^n$ be a domain. The subset $V \subseteq D$ is said to be a (holomorphic) {\it retract} of $D$ if there exists a holomorphic mapping $R:D\to D$ such that $V=R(D)$ and $R|_V = {\rm id}$ and the mapping $R$ is a {\it holomorphic retraction} associated to the retract $V$. The study of holomorphic retracts of domains in $\mb C^n$ has been done in many settings, for instance \cite{Aba 1989, Agl-Lyk-You 2019, Hea-Suf 1981}.  In this article, we focus on the retracts of Lempert domains (domains on which the methods of the Lempert theory holds) and provide a number of results for this general class of domains. Later, we specialize on $L_3$ to obtain a complete characterization of holomorphic retracts of it, where $L_n$ denotes the classical Cartan domain of type IV (or Lie ball) of dimension $n$.





\subsection{Main results} Now we state our first main result of this article (cf. Proposition~\ref{lemonegeo} and Theorem~\ref{l3retract2dim}).

\begin{theorem}\label{l3re} Let $M$ be a holomorphic retract of $L_3$. If $M$ is a two dimensional retract then $M$ is biholomorphic to $L_2\times\{0\}$. Otherwise, $M$ is a point or a complex geodesic or $L_3$.
\end{theorem}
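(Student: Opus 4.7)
The strategy is to split the argument according to the complex dimension $d := \dim_\C M \in \{0,1,2,3\}$. The extremal cases are immediate: for $d=0$, $M$ is a point, while for $d=3$ the retraction $R$ has rank $3$ somewhere, so $M = R(L_3)$ is open in $L_3$; since $M$ is also the fixed-point set $\{z \in L_3 : R(z) = z\}$, it is relatively closed, and connectedness of $L_3$ forces $M = L_3$. The case $d = 1$ is handled by Proposition~\ref{lemonegeo}, which asserts that in any Lempert domain a one-dimensional holomorphic retract is a complex geodesic.

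The main content is the case $d = 2$, stated as Theorem~\ref{l3retract2dim}. My plan is first to reduce to a linear model via the general results on retracts of Lempert domains discussed earlier in the paper, which associate arbitrary retracts with linear ones. After composing with a suitable element of $\Aut(L_3)$ I would arrange that $0 \in M$, so that the differential $dR_0$ is a linear idempotent of rank $2$ on $\C^3$; its image $V$ is a complex two-plane tangent to $M$ at the origin. Using the linearization machinery together with the abundance of complex geodesics in $L_3$ through $0$ lying inside $M$ (which, when passed to a linear model, become linear discs), I would identify $M$ with $V \cap L_3$ as a subset of $L_3$.

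The final step is to classify, up to the action of $\Aut(L_3)$, the complex two-planes $V \subset \C^3$ whose intersection with $L_3$ actually arises as the image of a holomorphic retraction. The linear isotropy subgroup of $\Aut(L_3)$ at $0$ is an orthogonal-type group and acts on the Grassmannian of two-planes in $\C^3$ with finitely many orbits, distinguished by the behaviour of the quadratic form $z \mapsto z \cdot z$ implicit in the definition of $L_3$ on $V$ (e.g.\ totally isotropic versus non-degenerate). The plan is to show that the only orbit leading to a genuine retract contains the coordinate plane $\{z_3 = 0\}$, in which case $V \cap L_3 = L_2 \times \{0\}$. The step I expect to be the main obstacle is this orbit-by-orbit analysis: for each candidate plane $V$ one must either exhibit the retraction explicitly (in the surviving case, where the retraction $(z_1,z_2,z_3)\mapsto (z_1,z_2,0)$ is plainly well-defined on $L_3$) or rule out the existence of any retraction of $L_3$ onto the corresponding planar section, the latter likely requiring a biholomorphic invariant (such as a comparison of Kobayashi extremals, of Bergman data, or of boundary regularity) to distinguish the section from $L_2$.
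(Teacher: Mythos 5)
Your treatment of dimensions $0$, $1$ and $3$ is fine and matches the paper (dimension $1$ is exactly Proposition~\ref{lemonegeo}; dimension $3$ follows from the open--closed argument), and your reduction of the two-dimensional case to a linear retract $V\cap L_3$ through the origin is the same first move the paper makes via Remark~\ref{remark:reduction-retracts}. But the heart of the theorem --- showing that the only two-dimensional \emph{linear} retract of $L_3$ up to a linear automorphism is $L_2\times\{0\}$ --- is precisely the step you defer, and the strategy you sketch for it does not work. First, the effective linear isotropy group at $0$ is $\mathbb T\cdot SO(3,\mathbb R)$, and $\mathbb T$ acts trivially on subspaces, so you are left with a $3$-dimensional group acting on the Grassmannian of two-planes in $\mathbb C^3$, which has real dimension $4$: there are \emph{not} finitely many orbits, but a continuous family (this is why the paper's computation carries a modulus $a$ with $|a|\leq 1$). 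Second, your proposed criterion --- ruling out a candidate plane by finding a biholomorphic invariant distinguishing the section $V\cap L_3$ from $L_2$ --- is the wrong test: what must be excluded is the existence of a \emph{linear retraction} of $L_3$ onto $V\cap L_3$, and the paper does this by a direct computation, evaluating the putative retraction on real points of the Shilov boundary sphere $\mathbb S^2_{\mathbb R}$ and contradicting the defining inequality $2\|z\|^2-|z\bullet z|^2\leq 1$ of $\overline L_3$ for every $|a|<1$ (with a separate argument for $|a|=1$). None of this is present in your proposal.

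There is a secondary gap in your passage from the linear model back to $M$ itself. You propose to identify $M$ with $V\cap L_3$ by filling it with complex geodesics through $0$ in directions of $V$; but $L_3$ does \emph{not} have uniquely determined complex geodesics (it contains a bidisc through the origin), and the paper explicitly cautions in Remark~\ref{remark:form-retracts} that the ``union of geodesics'' description is only available under uniqueness and even then is merely a necessary form. The paper instead uses Theorem~\ref{theorem:uniqueness-retracts} (a Cartan-theorem argument showing two retracts with the same tangent space are biholomorphic via the retractions themselves) to realize $M$ as a graph $\{(z_1,z_2,f(z_1,z_2))\}$ over $L_2$, and then kills $f$ by observing that $L_3\subset\mathbb B_3$ forces $f\to 0$ along the Shilov boundary $\partial_S L_2$ (Remark~\ref{Remfzero}). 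You would need to replace your geodesic-filling step with an argument of this kind.
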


More precisely, any two dimensional retract $M$ of $L_3$ equals $\Phi(L_2\times\{0\})$, where $\Phi$ is a holomorphic automorphism of $L_3$. Also, the open unit bidisc $\mb D^2\subseteq\mb C^2$ is biholomorphic to $L_2.$ In short, the holomorphic retracts of the Lie ball $L_3$ are geometrically: points, discs, bidiscs or $L_3$. Moreover, a direct application of \cite[Theorem 2]{Mac 2020} shows that Theorem~\ref{l3re} describes all (polynomial) extension sets of $L_3.$ %

A complete characterization of all holomorphic retracts of the open unit ball $\mb B_n$ and the open unit polydisc $\mb D^n$ in $\mb C^n$ can be found in \cite[Chapter 8]{Rud 2008} and \cite[p. 130, Theorem 3]{Hea-Suf 1981}, respectively. The retracts of $\mb D^n$ and $\mb B_n$ are nothing but the lower dimensional polydiscs $\mb D^k,\,\, k\leq n$ and unit balls $\mb B_k,\,\, k\leq n,$ respectively. In Theorem \ref{l3re}, we show that $L_3$ exhibits a very similar attribute as $\mb D^n$ and $\mb B_n$. However, this kind of observation cannot be extended for higher dimensional $L_n.$ Although $L_k\times \{0\}^{n-k},\,\,k\leq n$ are retracts of $L_n$, but they are not exhaustive for $n\geq 4$. For example, there exists a retract of $L_{2n},\,\,n\geq2$ biholomorphic to $\mb B_n,$ cf. Subsection \ref{counterex}.

The problem of finding all the retracts in the proper images of Cartan domains is rather recent. The symmetrized bidisc $\mb G_2$ is the image of $\mb D^2$ under the $2$-proper holomorphic map $(z,w) \mapsto (z+w,zw).$ A comprehensive study on the holomorphic retracts and extension sets of the symmetrized bidisc is done in \cite{Agl-Lyk-You 2019}. If $D$ is a bounded domain in $\mb C^3$ and $f:L_3\to D$ is a proper holomorphic map of multiplicity $2$ then $D$ is biholomorphic to the tetrablock $\mb E$ \cite[Proposition 3.4]{Gho-Zwo 2023}.  Motivated by \cite{Agl-Lyk-You 2019}, we extend our observation to find all the retracts in the tetrablock. 

To state the result, we recall that $L_3$ and $R_{III}(2)$ are holomorphically equivalent, where $R_{III}(2)$ denotes the set of all $2\times2$ symmetric complex matrices whose largest singular value is less than $1.$ The $2$-proper holomorphic mapping $\Lambda: R_{III}(2) \to \mb E$ is defined by $$\Lambda(A)=(a_{11},a_{22},a_{11}a_{22}-a_{12}^2)$$ and $\mathcal J(\Lambda)$ denotes the set of critical arguments of $\Lambda.$ 


\begin{theorem} Let $M$ be a holomorphic retract of the tetrablock $\mathbb E$. If $M$ has dimension two, then $M$ is of one of the following forms:
\begin{enumerate}
    \item $\Lambda(N)$, where $N$ is a two-dimensional holomorphic retract in $R_{III}(2)$ such that $N\cap\mathcal J(\Lambda)=\emptyset$ or
    \item $\Phi(N),$ where $\Phi$ is an automorphism of $\mathbb E$ and $N$ is either $\{(a,b,ab):a,b\in\mathbb D\},$ or $\left\{\left(\frac{s}{2},\frac{s}{2},p\right):(s,p)\in\mathbb G_2\right\}$.
\end{enumerate} Otherwise, $M$ is either a point or a complex geodesic or is equal to $\mathbb E$.
\end{theorem}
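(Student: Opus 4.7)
The plan is to lift the classification from the tetrablock $\mathbb E$ to the Lie ball $L_3\cong R_{III}(2)$ through the $2$-proper map $\Lambda$ and invoke Theorem~\ref{l3re}. The non-two-dimensional cases are immediate: single points are retracts trivially, complex geodesics are retracts of the Lempert domain $\mathbb E$ by general Lempert theory, and $\mathbb E$ itself is the image of the identity. So the substance of the argument concerns a two-dimensional retract $M$ of $\mathbb E$ with retraction $r\colon\mathbb E\to\mathbb E$, and the analysis splits according to whether $M\cap\Lambda(\mathcal J(\Lambda))=\emptyset$.

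\textbf{Case I: $M\cap\Lambda(\mathcal J(\Lambda))=\emptyset$.} The restriction $\Lambda\colon\Lambda^{-1}(M)\to M$ is an unramified $2$-sheeted covering, and the simple connectedness of $R_{III}(2)$ allows the composition $r\circ\Lambda\colon R_{III}(2)\to M$ to be lifted to a holomorphic map $\tilde r\colon R_{III}(2)\to R_{III}(2)$ with $\Lambda\circ\tilde r=r\circ\Lambda$. Both $\tilde r\circ\tilde r$ and $\tilde r$ then lift the same map $r\circ\Lambda$ (using $r^2=r$), so uniqueness of covering-space lifts forces $\tilde r^2=\tilde r$ once agreement at a single point is checked; hence $\tilde r$ is a retraction. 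Its image $N:=\tilde r(R_{III}(2))$ is a two-dimensional retract of $R_{III}(2)$ disjoint from $\mathcal J(\Lambda)$, and $\Lambda|_N$ is a biholomorphism onto $M$. Theorem~\ref{l3re} identifies $N$ with $\Phi(L_2\times\{0\})$ for some $\Phi\in\Aut(R_{III}(2))$, placing $M=\Lambda(N)$ in the form (1).

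\textbf{Case II: $M\cap\Lambda(\mathcal J(\Lambda))\neq\emptyset$.} Now the analytic variety $\tilde M:=\Lambda^{-1}(M)$ is invariant under the deck involution $\sigma\colon A\mapsto\diag(1,-1)\,A\,\diag(1,-1)$ of $\Lambda$ and meets the critical locus $\{a_{12}=0\}$. The two model retracts listed in (2) correspond precisely to $\Lambda$-images of the two $\sigma$-invariant subvarieties of $R_{III}(2)$ that touch the critical locus in an essential way: the diagonal matrices $\Lambda$-project onto $\{(a,b,ab):a,b\in\mathbb D\}$, and the matrices of the form $\begin{pmatrix}a&b\\b&a\end{pmatrix}$ with eigenvalues $a\pm b\in\mathbb D$ project onto $\{(s/2,s/2,p):(s,p)\in\mathbb G_2\}$ after reparameterizing via the symmetrization. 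The plan is first to exhibit explicit holomorphic retractions of $\mathbb E$ onto each of these two sets, thereby verifying they really are retracts; then, for a general such $M$, analyze the irreducible components of $\tilde M$ (each either $\sigma$-invariant or exchanged with another by $\sigma$) and apply Theorem~\ref{l3re} at the level of normalizations to force one of the two invariant models up to $\Aut(R_{III}(2))$; finally, transport the result through $\Lambda$ and use $\Aut(\mathbb E)$ to reduce $M$ to one of the listed normal forms.

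The main obstacle is Case II. One has to show that a two-dimensional retract of $\mathbb E$ meeting the critical value set is severely constrained, namely $\Aut(\mathbb E)$-equivalent to exactly one of the two explicit models, and that no other behavior of $\tilde M$ near the ramification is compatible with being a retraction. This will require careful local analysis at critical points, use of the $\sigma$-invariance of $\tilde M$, and the rigidity of Theorem~\ref{l3re} applied to components of $\tilde M$.
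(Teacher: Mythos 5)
Your treatment of the retracts omitting the royal variety is sound and amounts to essentially the same covering-space argument as the paper's Proposition~\ref{proposition:lifting-retracts}: the paper lifts $\mathrm{id}_M$ using simple connectedness of $M$ (obtained from starlikeness of $\mathbb E$), while you lift $r\circ\Lambda$ using simple connectedness of $R_{III}(2)$; either works, and your idempotence check via uniqueness of lifts is fine once the base point of the lift is chosen on a fibre over a fixed point of $r$. The non-two-dimensional cases are likewise handled as in the paper via Proposition~\ref{lemonegeo}.

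The genuine gap is Case II, which you yourself flag as ``the main obstacle'': what you give there is a plan, not a proof, and the plan rests on an unjustified step. You propose to pass to $\tilde M=\Lambda^{-1}(M)$ and ``apply Theorem~\ref{l3re} at the level of normalizations'' to its irreducible components. But Theorem~\ref{l3re} classifies \emph{retracts} of $L_3\cong R_{III}(2)$, and when $M$ meets the royal variety the covering-space lifting breaks down, so you have no retraction of $R_{III}(2)$ onto $\tilde M$ or onto any of its components; a two-dimensional $\sigma$-invariant analytic subset is a far weaker structure than a retract, and nothing in your sketch excludes such sets other than the two models. The paper's actual route is entirely different: it normalizes $0\in M$, computes $\kappa_{\mathbb E}(0;\cdot)$ and classifies the two-dimensional linear retracts of the indicatrix $I_{\mathbb E}(0)$ (Lemmas~\ref{Lemma estimate} and~\ref{linret}), showing that $T_0M\cap I_{\mathbb E}(0)$ must be $\mathbb D^2\times\{0\}$ or $V_t$ for some $t\in[0,1]$; it then excludes $V_t$ with $t<1$ by the uniqueness of left inverses $\Psi_\omega$ to complex geodesics through $0$ in directions of $V_t$ (via Proposition~\ref{proposition:necessary-retraction}, which forces the retraction to be the identity, a contradiction), and finally pins down $M$ in the two surviving cases using the explicit description of complex geodesics of $\mathbb E$ through the origin. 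None of these ingredients --- the indicatrix computation, the linear-retract classification, the left-inverse rigidity, the geodesic classification --- appears in your proposal, and without some substitute for them the classification in Case II remains open.
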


Consequently, holomorphic retracts of the tetrablock can be the points, discs, bidiscs, symmetrized bidiscs or the tetrablock itself. A very recent article in arXiv \cite{Bal-Mam 2024} provides an updated survey of the theory of retracts. We complement and extend methods from it to provide various general properties of retracts.

    

\subsection{Lempert domains, retracts -- basic information and preliminary results}
Let $D$ be a domain in $\mb C^n.$ For $z,w\in D,$ we define the {\it Lempert function} by $$l_D(z,w) := {\rm inf}\{p(\lambda_1,\lambda_2): \text{ there exists } f\in \m O(\mb D,D) \text{ such that } f(\lambda_1)=z,f(\lambda_2)=w\}$$ and the {\it Carath\'eodory pseudodistance} by $$c_D(z,w):={\rm sup}\{p(F(z),F(w)): F \in \m O(D,\mb D)\},$$ where $\m O(D_1,D_2)$ denotes the set of holomorphic functions from the domain $D_1$ to the domain $D_2$ and $p$ denotes the Poincar\'e distance on the unit disc $\mathbb D.$ In 1981, Lempert proved that $l_D=c_D,$ if $D$ is a convex domain in $\mb C^n$ \cite{Lem 1981}. For basic information on holomorphically invariant functions, their properties and Lempert theorem we suggest the reader to consult \cite{Jar-Pfl 2013}. We call a connected complex manifold $M$ {\it Lempert manifold} if it is taut and $l_M=c_M.$ It is an immediate consequence of the definition that $(M,l_M)$ is a complete metric space. For any two distinct points $z$ and $w$ in a Lempert manifold, there exists a complex geodesic passing through those points. The mapping $f\in\mathcal O(\mathbb D,D)$ is called a {\it complex geodesic} if there exists $F\in\mathcal O(D,\mathbb D)$ such that $F\circ f$ is an automorphism of $\mathbb D$ and such an $F$ is called a {\it left inverse for $f$}. 
With appropriate choice of holomorphic automorphisms of $\mathbb D$, say $a$ and $b,$ we can consider the complex geodesic $f\circ b$ and its left inverse $a\circ F$ such that $a\circ F\circ f\circ b$ is identity. Thus without loss of generality, we assume that $F\circ f$ is the identity and identify the left inverses (or complex geodesics) which are equal up to suitable automorphisms of $\mathbb D$. The problem of (non)-uniqueness of left inverses has been studied in detail in \cite{Kos-Zwo 2016}. For example, bounded convex domains, strongly linearly convex domains, the symmetrized bidisc and the tetrablock are Lempert domains \cite{Abo-You-Whi 2007,Agl-You 2001,Agl-You 2006,Cos 2004,Edi-Kos-Zwo 2013,Lem 1981,Lem 1982,Lem 1984}.  


For a holomorphic self-mapping $F:D\to D,$ we denote the set of fixed points by $$\Fix (F)=\{z\in D:F(z)=z\}.$$ If $D$ is taut, then $\Fix(F)$ is a complex manifold \cite[Section 2.5]{Aba 1989}. In particular, if $R:D\to D$ is a holomorphic retraction and $V=R(D)$ is the associated retract, then clearly $V=\Fix(R)$ is a complex submanifold (when $D$ is taut).

However, for any holomorphic self-mapping $F,$ the set $\Fix (F)$ might not be a retract in general. We provide an elementary example to show it, see also \cite{Fri-Ma-Vig 2006}. If $\Fix(F)$ is a retract, then it is connected. Consider the holomorphic mapping $F$ from the annulus $A(0;1/r,r) \to A(0;1/r,r)$ such that $F(\lambda)= -\lambda$ and note that $\Fix (F)$ is not connected, hence not a retract. 
However, the answer is yes for convex domains \cite{Aba 1989}. 

\begin{remark}\label{rem1} Let $D$ be a Lempert domain. Any holomorphic retract $V\subset D$ is {\it weakly totally geodesic}, that is, for any two points $w,z\in V$ there exists a complex geodesic passing through $w$ and $z$ which is entirely contained in $V$. To see this, we suppose that $f:\mathbb D\to D$ is a complex geodesic passing through $w$ and $z$ and $R:D\to V$ is a retraction. Then the map $R\circ f$ is a complex geodesic in $D$ joining $w$ and $z$ that lies entirely in $V$.\end{remark}



Suppose that $M\subset D$ is a holomorphic retract in a Lempert domain $D$.  It is an interesting problem to determine (up to biholomorphisms) the $k$-dimensional manifolds $M$ for which there are holomorphic mappings $i:M\to D$ and $r:D\to M$ such that $i\circ r$ is the identity. This is a generalization of the notions of complex geodesics and left inverses. Below we see that the one dimensional retracts are determined uniquely and they are complex geodesics (geometrically discs).

\begin{proposition}\label{lemonegeo}
Let $D$ be a Lempert domain and $V$ be a one-dimensional retract in $D$. Then $V$ is the (image) of a complex geodesic.
\end{proposition}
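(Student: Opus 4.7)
The plan is to produce a single complex geodesic whose image is precisely $V$. First I would record the structure of $V$ as a Riemann surface: as a retract of the taut domain $D$ it carries a complex submanifold structure by the result from \cite{Aba 1989} cited just before the statement, and as a continuous image of the connected domain $D$ under the retraction $R$ it is connected. Hence $V$ is a connected one-dimensional complex submanifold of $D$.

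Next I would pick two distinct points $w,z\in V$ and, using Remark~\ref{rem1} (weak total geodesy of retracts in Lempert domains), choose a complex geodesic $\phi:\mathbb D\to D$ passing through $w$ and $z$ whose image satisfies $\phi(\mathbb D)\subseteq V$. Let $F:D\to\mathbb D$ be a left inverse of $\phi$, normalized so that $F\circ\phi=\id_{\mathbb D}$; in particular $\phi$ is injective.

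The key step is to notice that the composition
\[
\rho:=\phi\circ (F|_V)\colon V\to V
\]
is a holomorphic retraction of $V$ onto $\phi(\mathbb D)$: for any $v=\phi(\lambda)\in\phi(\mathbb D)$ one computes $\rho(v)=\phi(F(\phi(\lambda)))=\phi(\lambda)=v$. Consequently $\phi(\mathbb D)=\Fix(\rho)$ is closed in $V$. On the other hand, $\phi\colon\mathbb D\to V$ is a non-constant holomorphic map into a one-dimensional complex manifold, so by the open mapping theorem its image is open in $V$. Since $V$ is connected and $\phi(\mathbb D)$ is a nonempty clopen subset, I conclude $\phi(\mathbb D)=V$, so $V$ is the image of the complex geodesic $\phi$.

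I do not anticipate a serious obstacle. The real content lies in the observation that composing the left inverse $F$ of $\phi$ with $\phi$ itself yields a retraction of $V$ onto $\phi(\mathbb D)$; this trick bypasses any delicate argument involving completeness of $(V,l_V)$ or uniqueness of geodesics, and reduces the proposition to the elementary clopen-subset argument on the connected one-dimensional manifold $V$.
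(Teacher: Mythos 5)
Your proof is correct and follows the same route as the paper: take two distinct points of $V$, invoke Remark~\ref{rem1} to get a complex geodesic joining them inside $V$, and conclude by connectedness of the one-dimensional manifold $V$. The only difference is that the paper compresses the last step into ``by the connectivity of both $V$ and $f(\mathbb D)$'', whereas you make it precise by exhibiting $\phi\circ(F|_V)$ as a retraction of $V$ onto $\phi(\mathbb D)$ to get closedness, and using the open mapping theorem for openness --- a welcome clarification, since connectedness alone would not rule out a proper open inclusion of one Riemann surface in another.
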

\begin{proof} Let $z$ and $w$ be two distinct points in $V$. There exists a complex geodesic $f:\mathbb D \to D$ that joins $z$ and $w$ and lies entirely in $V,$ cf. Remark \ref{rem1}. Since $V$ is a one-dimensional complex manifold which contains another one-dimensional complex manifold, by the connectivity of both $V$ and $f(\mathbb D),$ we conclude that $V=f(\mathbb D)$.
\end{proof}
We indicate that this result is proved by an elementary argument in a quite general setting. A similar result as in Proposition \ref{lemonegeo} is proved for the symmetrized bidisc in \cite[Theorem 5.1]{Agl-Lyk-You 2019}. On the other hand, under other assumption of simple connectivity of $D,$ the result is proven in \cite[Theorem 2.25]{Bal-Mam 2024}. 

Certainly, if $M\subset D$ is a retract then $M$ is the extension set; in particular $c_M=(c_D)_{|M\times M}$. Recall that $M\subset D$ is called an {\it extension set} if any bounded holomorphic function $f$ on $M$ can be extended to a holomorphic function defined on $D$ with the sup norm preserved.   We also know that a retract $M$  of the Lempert domain $D$ is weakly totally geodesic; thus, $l_M=(l_D)_{|M\times M}$. Therefore, we formulate the following result.

\begin{proposition}\label{proposition:retract-is-lempert} Suppose that $D$ is a Lempert domain. Let  $M\subset D$ be a holomorphic retract, then $M$ is a Lempert manifold.
\end{proposition}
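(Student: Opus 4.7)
My plan is to unpack the two conditions in the definition of a Lempert manifold and verify each using the retraction $R$. A Lempert manifold must be a connected taut complex manifold with $l_M=c_M$, so there are three things to check: $M$ is a connected complex submanifold, $l_M=c_M$, and $M$ is taut. The first is immediate: $M=\Fix(R)$ is a complex submanifold because $D$ is taut (as noted in the paragraph preceding Remark~\ref{rem1}), and $M=R(D)$ is connected by continuity of $R$.

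For the equality $l_M=c_M$, I would simply invoke the two restriction equalities already recorded in the paragraphs preceding the statement: $c_M=(c_D)_{|M\times M}$ (from $F\circ R$ being a norm-preserving extension to $D$ of any bounded holomorphic $F$ on $M$) and $l_M=(l_D)_{|M\times M}$ (from Remark~\ref{rem1}, i.e., the weakly totally geodesic property of $M$). Combining these with the Lempert property $l_D=c_D$ of $D$ yields $l_M=c_M$ at once, so this step is essentially bookkeeping.

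The main point is thus tautness of $M$. I would take an arbitrary sequence $(f_n)\subset\mathcal{O}(\mathbb{D},M)$, view it inside $\mathcal{O}(\mathbb{D},D)$, and apply tautness of $D$ to extract either a locally uniformly convergent subsequence with limit $f\in\mathcal{O}(\mathbb{D},D)$, or a compactly divergent subsequence. Compact divergence in $D$ transfers to $M$ immediately since compact subsets of $M$ are compact in $D$ and $f_n(\mathbb{D})\subset M$. In the convergent case, continuity of $R$ combined with the identity $R\circ f_n=f_n$ passes to the limit to force $f=R\circ f$, hence $f(\mathbb{D})\subset M$. This last point is the one mild obstacle: without the retraction, a locally uniform limit of holomorphic maps into $M$ could a priori land on a boundary point of $M$ inside $D$, but $R$ pulls the limit back into $M$ automatically, so the issue dissolves as soon as one writes down $R\circ f_n=f_n$.
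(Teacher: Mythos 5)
Your proposal is correct and follows essentially the same route as the paper, which derives the proposition directly from the two restriction identities $c_M=(c_D)_{|M\times M}$ (extension via $F\circ R$) and $l_M=(l_D)_{|M\times M}$ (weak total geodesy) combined with $l_D=c_D$; the paper leaves the tautness of $M$ implicit, and your retraction argument $R\circ f_n=f_n$ is the standard way to supply it.
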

However, the case of the tridisc shows that the converse does not hold in general \cite{Kos-Zwo 2021}. Now we make another relevant observation to relate the problem of uniqueness of left inverses for complex geodesics with the necessary form of retractions.

\begin{proposition}\label{proposition:necessary-retraction}
    Let $D$ be a Lempert domain in $\mathbb C^n$ and $M\subset D$ be a retract with the retraction $R:D \to M$. If there exists a complex geodesic $f$ with its image lying entirely in $M$ which has uniquely determined left inverse $F$ (in $D$), then $$F\circ R(z)=F(z)\,\, \text{ for all } z\in D.$$
\end{proposition}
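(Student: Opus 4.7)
The plan is simply to exhibit $F \circ R$ as another left inverse of the complex geodesic $f$ and then invoke uniqueness.

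First I would note that since $f(\mathbb{D}) \subseteq M$ and $R$ restricted to $M$ is the identity, we have $R \circ f = f$. Composing with $F$ on the left gives
\[
(F \circ R) \circ f = F \circ (R \circ f) = F \circ f,
\]
and by hypothesis $F \circ f$ is the identity of $\mathbb{D}$ (recall the normalization fixed in the preliminaries identifying left inverses up to automorphisms of $\mathbb{D}$).

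Next I would observe that $F \circ R \colon D \to \mathbb{D}$ is a holomorphic map, since both $R \colon D \to M \subseteq D$ and $F \colon D \to \mathbb{D}$ are holomorphic. Combined with the previous step, this shows that $F \circ R$ is a left inverse in $D$ of the complex geodesic $f$.

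The hypothesis asserts that $f$ admits a uniquely determined left inverse in $D$, namely $F$. Therefore
\[
F \circ R = F \quad \text{on } D,
\]
which is the desired identity. The argument is essentially a one-line diagram chase, with no real obstacle; the whole content is packed into the assumed uniqueness of the left inverse and the trivial fact that $R$ fixes $M$ pointwise. No further case analysis or estimates are needed.
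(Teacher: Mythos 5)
Your proof is correct and follows exactly the same route as the paper: the paper's own argument is the one-liner that $F\circ R$ is a left inverse of $f$ (since $R$ fixes $f(\mathbb D)\subseteq M$ pointwise) and then uniqueness forces $F\circ R=F$. You have merely written out the composition $(F\circ R)\circ f=F\circ f=\id_{\mathbb D}$ explicitly, which is a faithful elaboration of the same idea.
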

\begin{proof}
It is sufficient to observe that $F\circ R$ is a left inverse to $f$ and then the result follows from the  assumption of the uniqueness of left inverse.
\end{proof}


\subsection{Linear retracts in the indicatrix provide a necessary form of a retract}
Let $\delta$ be a holomorphically invariant function on $D$ (such as Carath\'eodory-Reiffen pseudometric $\gamma$, Kobayashi-Royden pseudometric $\kappa$).  Basic information of these metrics can be found in \cite{Jar-Pfl 2013}. For $z\in D,$ the set $$I^{\delta}_D(z):=\{X\in\mathbb C^n:\delta_D(z;X)<1\}$$ is called {\it the indicatrix} of $D$ at $z$. 

Let $F:D\to D$ be a holomorphic self-mapping. If $M=\Fix(F)$ is a complex manifold, we have the following inclusion for every $z\in D$
\begin{equation*}
F^{\prime}(z)(I_D^{\delta}(z))\subset I_D^{\delta}(z) \text{ and } F^{\prime}(z)|_{T_zM}=\id_{T_zM}.
\end{equation*}
That is, we get some necessary condition for the structure of $\Fix(F)$. If $F$ is a retraction, we additionally know that the mapping $F^{\prime}(z)$ maps $I_D^{\delta}(z)$ into $T_zM$.  Consequently, $I_D^{\delta}(z)\cap T_z M$ is a {\it linear retract}  of $I_D^{\delta}(z)$. The concept of a linear retract was considered in \cite{Bal-Mam 2024}.
Certainly, if $D$ is a Lempert domain then we may neglect the superscript in $I_D^{\delta}$.


\begin{proposition} Let $D$ be a Lempert domain in $\mb C^n.$ If $F:D\to M\subset D$ is a holomorphic retraction, then $T_zM\cap I_D^{\delta}(z)$ is a linear retract of $I_D^{\delta}(z)$ for every $z\in M.$
\end{proposition}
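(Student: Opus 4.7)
The plan is to read off the proof from the three bulleted observations already made in the paragraph preceding the statement, checking that together they say precisely that $F'(z)$ is a linear retraction of the indicatrix onto its intersection with $T_zM$.

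First I would fix $z\in M$ and set $L:=F'(z)\colon\mathbb C^n\to\mathbb C^n$. Since $F\colon D\to D$ is holomorphic and $F(z)=z$, the contraction property of the Kobayashi--Royden (equivalently, by the Lempert assumption, Carathéodory--Reiffen) pseudometric gives $\delta_D(z;LX)\le\delta_D(z;X)$ for all $X\in\mathbb C^n$, hence $L(I_D^\delta(z))\subset I_D^\delta(z)$. Because $F(D)\subset M$ and $M$ is a submanifold of $D$ at $z$, differentiating at $z$ (say by applying $F$ to smooth curves through $z$) yields that the image of $L$ is contained in $T_zM$; thus $L(I_D^\delta(z))\subset I_D^\delta(z)\cap T_zM$. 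Finally, differentiating the identity $F|_M=\id_M$ at $z\in M$ gives $L|_{T_zM}=\id_{T_zM}$.

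Combining these facts, $L$ restricts to a linear map $I_D^\delta(z)\to I_D^\delta(z)\cap T_zM$ that fixes every point of $I_D^\delta(z)\cap T_zM$. By definition, this exhibits $I_D^\delta(z)\cap T_zM$ as a linear retract of $I_D^\delta(z)$ in the sense of \cite{Bal-Mam 2024}, which is exactly the conclusion.

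There is no real obstacle here: the only point that requires a (very mild) verification is the Lempert identity $I_D^\kappa(z)=I_D^\gamma(z)$, which is legitimate because $D$ is a Lempert domain, so the superscript in $I_D^\delta$ is immaterial and all three ingredients above hold simultaneously for the one well-defined indicatrix.
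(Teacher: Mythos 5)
Your proposal is correct and follows exactly the route the paper takes: the paper proves this proposition implicitly in the paragraph immediately preceding it, by noting that $F'(z)$ maps the indicatrix into itself (decreasing property of the invariant metric, using $F(z)=z$), maps everything into $T_zM$ (since $F(D)\subset M$), and restricts to the identity on $T_zM$ (since $F|_M=\id$), which is precisely your three-step argument. Nothing is missing; your added remark that the superscript $\delta$ is immaterial on a Lempert domain matches the paper's own comment to that effect.
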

\begin{remark}\label{remark:reduction-retracts} Let $D$ be a bounded balanced convex domain. Suppose that $M$ is a holomorphic retract passing through $0$. We know that $I_D(0)\cap T_0M$ is a linear retract (and thus holomorphic) of $T_0D=D$. Let $R:T_0D=D\to T_0M\cap D$ be a linear retraction. Then $d_0R=R$ is the identity on $T_0M.$ We use \cite[Theorem 0.1]{Bel 1994} for $R|_M$ to conclude that $R$ is a one-to-one mapping from $M$ onto $D\cap T_0M$. For a similar result compare \cite[Theorem 3.7]{Bal-Mam 2024}.
\end{remark}
If we have the uniqueness of complex geodesics (for example, in strongly linearly convex domains) the necessary form of a holomorphic retract $M$ must be the following $M=\{f(\lambda):f \text{ is a complex geodesic such that $f(0)=z$ and $f^{\prime}(0)\in T_zM$}\}$. This leads us to formulating  the following problem. 

\begin{remark}\label{remark:form-retracts}
Let $D$ be a Lempert domain with uniquely determined geodesics. Fix $w\in D$. Assume that for some $k$-dimensional linear subspace $V\subset\mathbb C^n$ the necessary condition for the holomorphic retract is satisfied, which means that a (convex) set $I_D(w)\cap V$ is a linear retract of $I_D(w)$. Then $M$ must be of the following form
\begin{equation*} M=\{f(\lambda):\lambda\in\mathbb D,\; f:\mathbb D\to D, f(0)=w, f^{\prime}(0)\in V, f\text{ is a complex geodesic in $D$}\}.
\end{equation*}
However, we presume that this necessary condition for the form of the holomorphic retract is not sufficient in general. It is trivially satisfied for the unit ball which allows us to find an immediate way to describe all the retracts in the unit ball. This approach is much simpler compared to the method in \cite{Suf 1975} where the description of the holomorphic retracts of the unit ball is provided. Hence, an interesting question is to determine when this necessary form is sufficient as well.  

Also, we note that even in a more general setting (without assuming the uniqueness of complex geodesics) we can find a candidate for a $k$-dimensional retract passing through the given point $z$ and with the given tangent space $V$. It is a union of (not necessarily all) complex geodesics passing through $z$ in the direction of vectors of $V$. 

The first step towards understanding the problem could be to employ the effective formulas for complex geodesics in convex ellipsoids \cite{Jar-Pfl-Zei 1993}, where the uniqueness of complex geodesics is present. We provide some non-trivial examples of retracts in the ellipsoids in a subsequent section.

\end{remark}

\subsection{Equivalence of retracts with the same tangent space}
 Let $M\subset D$ be a holomorphic retract. There can be more than one retractions $R:D \to M$ associated to $M.$ If $D$ is a Lempert domain, the one-dimensional holomorphic retracts are precisely the (images) of complex geodesics. 
 For the complex geodesic $f:\mathbb D\to D,$ there is a one-to-one correspondence between the retractions $R$ of $f(\mb D)$ and left inverses $F$ of $f$ given by the formula $R(z)=f\circ F(z),\,\,z\in D$. Therefore, in this one-dimensional case the (non)-uniqueness of retractions is equivalent to the (non)-uniqueness of left inverses. The latter problem has been studied, for instance, in \cite{Kos-Zwo 2016}. However, the following result addresses the uniqueness of retracts (up to a biholomorphism) under certain conditions. 


\begin{theorem}\label{theorem:uniqueness-retracts}
Let $D$ be a Lempert domain in $\mathbb C^n$ and $M_1$, $M_2$ be two $k$-dimensional retracts with $w\in M_1\cap M_2.$ Suppose that $R_j:D\to M_j$, $j=1,2$ are the corresponding holomorphic retractions. If $R'_j(w)$ restricted to $T_wM_{3-j}$ is a linear isomorphism onto $T_w M_{j}$ for $j=1,2$ and they are mutually inverse, then $M_1$ and $M_2$ are biholomorphic. Additionally, the two mutually invertible biholomorphisms are given by the formulae $(R_j)_{|M_{3-j}}:M_{3-j}\to M_j$ for $j=1,2.$
\end{theorem}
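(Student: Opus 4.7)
The plan is to reduce the statement to the Cartan uniqueness theorem applied on taut manifolds. First I would set $\phi_j:=(R_j)_{|M_{3-j}}:M_{3-j}\to M_j$ for $j=1,2$ and observe that by Proposition~\ref{proposition:retract-is-lempert} both $M_1$ and $M_2$ are Lempert manifolds, so in particular they are taut and hyperbolic, which is precisely the setting where the Cartan-type uniqueness theorem remains valid.

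Next I would form the compositions $\phi_1\circ\phi_2:M_1\to M_1$ and $\phi_2\circ\phi_1:M_2\to M_2$. Because $w\in M_1\cap M_2$ and each $R_j$ fixes $M_j$ pointwise, we have $\phi_j(w)=R_j(w)=w$, so both compositions have $w$ as a fixed point. By the chain rule,
\begin{equation*}
(\phi_1\circ\phi_2)'(w)=R_1'(w)|_{T_wM_2}\circ R_2'(w)|_{T_wM_1},
\end{equation*}
which equals $\id_{T_wM_1}$ by the mutual-inverse hypothesis of the statement; symmetrically $(\phi_2\circ\phi_1)'(w)=\id_{T_wM_2}$.

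Finally, I would invoke the generalisation of Cartan's uniqueness theorem to taut manifolds (see, e.g., \cite{Aba 1989}): any holomorphic self-map of a taut manifold that fixes a point at which the differential is the identity is itself the identity. Applied to $\phi_1\circ\phi_2$ on $M_1$ and to $\phi_2\circ\phi_1$ on $M_2$, this immediately yields that $\phi_1$ and $\phi_2$ are mutually inverse holomorphic maps, so that the restrictions $(R_j)_{|M_{3-j}}:M_{3-j}\to M_j$ realise the asserted biholomorphism $M_1\cong M_2$. The only delicate point in the argument is the appeal to Cartan's theorem outside the bounded-domain setting, and this is exactly what the tautness afforded by Proposition~\ref{proposition:retract-is-lempert} is designed to supply; once it is invoked, the rest of the proof reduces to the chain-rule computation above.
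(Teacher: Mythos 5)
Your proof is correct and follows essentially the same route as the paper: both form the compositions $\phi_1\circ\phi_2$ and $\phi_2\circ\phi_1$, check that they fix $w$ with identity differential via the chain rule and the mutual-inverse hypothesis, and conclude by a Cartan uniqueness argument. The only (cosmetic) difference is that the paper justifies the Cartan step by restricting to a small Kobayashi ball biholomorphic to a bounded domain in $\mathbb C^k$ and then applying the identity principle, whereas you invoke the taut-manifold version of Cartan's theorem directly, which is legitimate since Proposition~\ref{proposition:retract-is-lempert} supplies the required tautness.
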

\begin{proof}
Let us assume that $w=0.$ We define $r_j=R_j\circ (R_{3-j})_{|M_j}:M_j\to M_j$. Then $r_j(0)=0$, $r_j'(0)$ is the identity on $T_0M_j$. We restrict ourselves to the ball $B_{M_j}(0,\epsilon)$ of radius $\epsilon$ with respect to the Kobayashi distance for suitably chosen $\epsilon>0$. For a small enough $\epsilon,$ we get a biholomorphism $\Phi_j:U_j\to B_{M_j}(0,\epsilon)$, where $U_j$ is a bounded domain in $\mathbb C^k$. Then we apply Cartan Theorem for $\Psi_j:=\Phi_j^{-1}\circ r_j\circ\Phi_j:U_j\to U_j$ to conclude that $\Psi_j$ is the identity and 
consequently $R_j$ and $R_{3-j}$ are mutually inverse in a small neighborhood of $0$. The identity principle (with suitable mappings restricted to $M_j$) finishes the proof. 
  \end{proof}

\begin{remark}\label{remark:uniqueness-retracts}
    Suppose that $T_wM_1=T_wM_2=\m X$ (say). Then each $R'_j(w),\,j=1,2,$ restricted to the tangent space $\m X$ is the identity. Hence, $M_1$ and $M_2$ are biholomorphic with the biholomorpisms described as in Theorem~\ref{theorem:uniqueness-retracts}.
\end{remark}
From this point we restrict our consideration to the Lie ball and the tetrablock.

\section{Retracts in the Lie ball}
In this section, we give a description of two-dimensional holomorphic retracts in the three dimensional Lie ball. We start by recalling some definitions and known results.

For $z\in \mb C^n,$ the Euclidean norm of $z$ is denoted by $||z||$ and $z\bullet z:=z_1^2+\ldots+z_n^2.$ Let $\mb B_n=\{z\in \mb C^n: ||z||<1\}.$  For $n\geq 1,$ 
\begin{equation*}
L_n =\{ z\in\mathbb B_n: 2||z||^2-|z\bullet z|^2<1\},
\end{equation*}
 denotes the Lie ball (or the classical Cartan domain of type $IV$) \cite{Ara 1995}. Since $L_n$ is a bounded symmetric domain, the understanding of its holomorphic retracts can be reduced to the ones passing through the origin. Additionally, $L_n$ is a convex and balanced domain in $\mathbb C^n$. The Shilov boundary of $L_n$ is given by $$\partial_S L_n:=\{\omega x: \omega \in \mb C,\,|\omega|=1 \text{ and } x\in\mathbb R^n,x_1^2+\ldots+x_n^2=1\}.$$ In other words, $\partial_S L_n=\mb T\cdot \mathbb S_{\mathbb R}^{n-1},$ where $\mb T$ is the unit circle in $\mb C$ and $S_{\mathbb R}^{n-1}$ is the unit sphere in $\mb R^n.$ Clearly, $\partial_S L_2\times \{0\}\subseteq \partial_S L_3.$
 
 As $L_1=\mathbb D$ and $L_2$ is biholomorphic to the bidisc $\mathbb D^2,$ the first unknown problem for determining of the retracts is $L_3$. Below we give a complete description of retracts in $L_3$. As a good reference for properties of Lie balls, we recommend \cite{Ara 1995}.

\begin{remark}\label{Remfzero}
    Suppose that $M=\{(z_1,z_2,f(z_1,z_2)):(z_1,z_2)\in L_2\}\subset L_3$ for a holomorphic mapping $f:L_2\to\mathbb C.$ Evidently $M$ is a two-dimensional holomorphic retract of $L_3$. Since $L_3\subseteq \mathbb B_3$ we see that $f$ ends to $0$ as the points tend to $\partial_S L_2.$ Consequently, we conclude that $f$ is the zero function.
\end{remark}




    We recall some known facts which are essential for the next proof. The Lie ball $L_2$ is biholomorphic to $\mathbb D^2$ by $\phi:L_2 \to \mb D^2$ defined as follows:
\begin{equation}\label{bidisclie}
\phi(z_1,z_2)= (z_1+iz_2,-z_1+iz_2).
\end{equation}

\begin{remark}
Let $|a|\leq 1$ and $R_a:\mathbb D^2 \to \mathbb D^2$ be defined by $ R_a(z_1,z_2)=(z_1,az_1).$ Any (one-dimensional) linear retract of $\mathbb D^2$ passing through $0$ is of the form $R_a(\mathbb D^2)$ (up to a permutation of variables). Moreover, a complex geodesic $f=(f_1,f_2):\mb D\to \mb D^2$ has a non-uniquely determined left inverse if and only if both $f_1$ and $f_2$ are automorphisms of $\mb D.$ For $|a|<1$, the non-uniquely determined complex geodesics connecting $(0,0)$ and $(\lambda,a\lambda)$ determine the unique left inverse $F_a: \mb D^2 \to \mb D$ satisfying the formula $F_a(z_1,z_2) =z_1.$  Thus the (linear) retraction for the retract $R_a(\mathbb D^2)$ in the bidisc is uniquely determined. On the other hand, for $|a|=1$ the formula for the suitable retraction follows from the description of left inverses for the complex geodesic $\mathbb D\owns\lambda\to(\lambda,a\lambda)\in\mathbb D^2$. All the linear retractions are $R_{(a,t)}:\mathbb D^2 \to \mathbb D^2$ for $t\in [0,1]$, where
\begin{eqnarray}\label{eqna1}
    R_{(a,t)}(z_1,z_2)=(tz_1+(1-t)\overline{a}z_2)(1,a),
\end{eqnarray}
(for example, use \cite[Example 11.79]{Agl-McC 2002}).
\end{remark}

Our next result completes the description of all holomorphic retracts in $L_3$.

\begin{theorem}\label{l3retract2dim} Let $M$ be a two dimensional holomorphic retract of $L_3$. Then $M$ is biholomorphic to $L_2\times\{0\}$. 
\end{theorem}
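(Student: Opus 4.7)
My plan is to reduce the problem to a linear-algebra classification. By the homogeneity of $L_3$ (a bounded symmetric domain) I may assume $0\in M$; setting $V:=T_0M\subseteq\C^3$, Remark~\ref{remark:reduction-retracts} produces a linear idempotent $R:\C^3\to V$ with $R(L_3)\subseteq L_3$ and $R|_M$ a biholomorphism onto $V\cap L_3$. It thus suffices to classify those two-dimensional subspaces $V\subseteq\C^3$ for which $V\cap L_3$ is a linear retract of $L_3$, and check each is biholomorphic to $L_2$. The isotropy of $L_3$ at $0$ consists of the unitary maps on $\C^3$ preserving both $\|z\|^2$ and $|z\bullet z|^2$ (which together determine the Minkowski functional of $L_3$), so it equals $\{e^{i\theta}O:\theta\in\R,\,O\in O(3,\R)\}$. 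Writing $V=\ker(c\cdot z)$ for $c=x+iy\in\C^3$ and normalizing under this action (use $e^{i\theta}$ to force $x\perp y$ in $\R^3$, then $O\in O(3)$ and scaling to put the orthogonal pair in standard position), every $V$ is equivalent to $V_\beta:=\{z:z_1+i\beta z_2=0\}$ for a unique $\beta\in[0,1]$.

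The case $\beta=0$ is direct: the projection $\pi(z_1,z_2,z_3):=(0,z_2,z_3)$ is a linear retraction of $L_3$ onto $V_0\cap L_3$, the inequality $|z_1^2+z_2^2+z_3^2|^2-|z_2^2+z_3^2|^2\leq 2|z_1|^2$ on $L_3$ being an immediate consequence of $|z_2^2+z_3^2|\leq|z_2|^2+|z_3|^2\leq 1-|z_1|^2$. Inspection of the defining function gives $V_0\cap L_3=\{0\}\times L_2$; composing with the $O(3)$-permutation $z_1\leftrightarrow z_3$ puts this in the advertised form $L_2\times\{0\}$.

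The main work is to rule out $\beta>0$. A general linear idempotent with image $V_\beta$ has the form $\pi(z)=z-(z_1+i\beta z_2)v$ with $v=(v_1,v_2,v_3)\in\C^3$ and $v_1+i\beta v_2=1$. Using the formula $p_{L_3}(z)^2=\|z\|^2+\sqrt{\|z\|^4-|z\bullet z|^2}$ together with the triangle inequality $|(1-\beta^2)v_2^2+v_3^2|\leq(1-\beta^2)|v_2|^2+|v_3|^2$ (valid since $\beta\leq 1$) and $\sqrt{|v_2|^2+|v_3|^2}\geq|v_2|$, I derive the lower bounds $p_{L_3}(\pi(e_1))^2\geq(1+\beta)^2|v_2|^2+|v_3|^2$ and $p_{L_3}(\pi(e_2))^2\geq(1+\beta)^2|v_1|^2+\beta^2|v_3|^2$. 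Together with $|v_1+i\beta v_2|=1\leq|v_1|+\beta|v_2|$, the contraction hypothesis forces equality throughout and pins down the unique candidate $v=\bigl(\tfrac{1}{1+\beta},-\tfrac{i}{1+\beta},0\bigr)$. Evaluating at $z=(1,0,1)$, where $p_{L_3}(z)^2=2$, and using $(\beta^2+\beta+1)^2-\beta^2(1+\beta)^2=2\beta^2+2\beta+1$, one computes
\begin{equation*}
p_{L_3}(\pi(z))^2=\frac{2}{(1+\beta)^2}\bigl[(\beta^2+\beta+1)+\sqrt{2\beta^2+2\beta+1}\bigr],
\end{equation*}
and the contraction inequality $p_{L_3}(\pi(z))^2\leq 2$ reduces to $\sqrt{2\beta^2+2\beta+1}\leq\beta$, impossible for $\beta\geq 0$. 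The main obstacle is the $v_3$-coupling in the Minkowski square root: naive testing at $e_1,e_2$ alone leaves $v_3$ unconstrained, and the point is that the triangle-inequality step above produces lower bounds that cleanly separate the $|v_j|^2$ and $|v_3|^2$ contributions, which—combined with saturating $|v_1+i\beta v_2|=1$—pin down all three components of $v$.
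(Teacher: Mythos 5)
Your proof is correct, and while its first step coincides with the paper's --- the reduction, via Remark~\ref{remark:reduction-retracts}, to classifying the two-dimensional \emph{linear} retracts of $L_3$ through the origin --- the classification itself is carried out by a genuinely different route. The paper normalizes the \emph{kernel} of the linear retraction into $\mathbb C^2\times\{0\}$, transports the induced one-dimensional linear retract of $L_2\times\{0\}$ to the bidisc via $\phi$, invokes the known description of linear retracts and left inverses of $\mathbb D^2$ to obtain a parameter $a$ with $|a|\leq 1$, and then splits into $|a|<1$ (excluded by testing the retraction on the real sphere $\mathbb S^2_{\mathbb R}$ against the defining inequality of $L_3$) and $|a|=1$ (which yields $\{0\}\times L_2$). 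You instead normalize the \emph{image} subspace by the isotropy group $\{e^{i\theta}O\}$ down to the one-real-parameter family $V_\beta$, parametrize an arbitrary idempotent onto $V_\beta$ by its kernel vector $v$, and exploit the closed-form gauge $p_{L_3}(z)^2=\|z\|^2+\sqrt{\|z\|^4-|z\bullet z|^2}$ at the three test points $e_1$, $e_2$, $(1,0,1)$. I checked the details: the two lower bounds do follow after substituting $1-v_1=i\beta v_2$ (resp.\ $1-i\beta v_2=v_1$), estimating $|w\bullet w|$ by the triangle inequality and using $\sqrt{|v_2|^2+|v_3|^2}\geq|v_2|$; for $\beta>0$ the resulting equalities pin down $v=\bigl(\tfrac{1}{1+\beta},-\tfrac{i}{1+\beta},0\bigr)$; and the final inequality indeed collapses to $\sqrt{2\beta^2+2\beta+1}\leq\beta$, which is absurd. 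Your version avoids both the bidisc detour and the case split and is arguably cleaner and more self-contained. What the paper's version buys is the sharper statement recorded right after the theorem, namely that $M=\Phi(L_2\times\{0\})$ for an automorphism $\Phi$ of $L_3$ (obtained from the graph argument together with Remark~\ref{Remfzero}); your argument only identifies $M$ up to biholomorphism, which is all the stated theorem requires.
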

More precisely, any such a retract $M$ equals $\Phi(L_2\times\{0\})$, where $\Phi$ is a holomorphic automorphism of $L_3$.
\begin{proof} Without loss of generality, we assume that $0\in M$. The proof is twofold. First we show that the problem can be reduced to linear retracts of $L_3$. Then we prove that any two-dimensional linear retract of $L_3$ is linearly equivalent (up to a linear automorphism of $L_3$) to $L_2\times\{0\}.$

  Let $M$ be a two dimensional retract of $L_3$. Then $T_0M\cap L_3$ is a linear retract of $L_3$, cf. Remark~\ref{remark:reduction-retracts}. Moreover, if the retract $T_0M\cap L_3$ is linearly equivalent to $L_2\times\{0\}$ then $M$ must be holomorphically equivalent to the retract of the form $\{(z_1,z_2,f(z_1,z_2)):(z_1,z_2)\in L_2\}\subset L_3$ for a holomorphic mapping $f:L_2\to\mathbb C$ (use Theorem~\ref{theorem:uniqueness-retracts} and Remark~\ref{remark:uniqueness-retracts}). We conclude that the function $f$ must be identically $0$ from Remark \ref{Remfzero}.

It remains to prove that any two-dimensional linear retract of $L_3$ is linearly equivalent (up to a linear isomorphism of $L_3$) to $L_2\times\{0\}$.  Consider a two-dimensional linear retract $M=L_3\cap V$ of $L_3$ with the linear retraction $R:L_3 \to M$ and $V$ being a linear two-dimensional subspace of $\mathbb C^3$.

Up to a linear isomorphism of $L_3,$ we may assume that $R(v_1)=0$ for some $v_1\in\partial L_2\times\{0\}$. Then $V\cap(L_2\times\{0\})$ is one dimensional and spanned by $v_2\in\partial L_2\times\{0\}$. Therefore, $R|_{L_2\times \{0\}}$ is a linear retraction with the retract $\mathbb Cv_2\cap (L_2\times \{0\})$. From the explicit form of linear retracts in $\mathbb D^2$ and Equation \eqref{bidisclie}, we can consider $v_2=\frac{1}{2}(1-a,-i(1+a),0)$ for some $|a|\leq 1.$

Suppose that $|a|<1$. Hence, from the uniqueness of holomorphic retractions in $\mathbb D^2,$ we may assume that $v_1=\frac{1}{2}(1,-i,0)$. Let $v_3=(b_1,b_2,1)$ be a vector such that $v_2$ and $v_3$ span $V$. The point $(0,0,1)=\lambda_1 v_1+\lambda_2 v_2+\lambda_3 v_3 \in \overline{L}_3.$ The linear retraction $R$ must map $(0,0,1)$ to a point with the Euclidean norm $\leq 1.$ Clearly, $\lambda_3=1$ and $(0,0,1)$ can be taken as $v_3$. Let $x=(x_1,x_2,x_3)\in \mb S_{\mb R}^{2}:=\{x\in\mathbb R^3:||x||=1\}.$ Then $Rx\in \overline{L}_3$. We write $$x=\lambda_1\left(\frac{1}{2},\frac{i}{2},0\right)+\lambda_2\left(\frac{1-a}{2},-i\frac{1+a}{2},0\right)+\lambda_3(0,0,1)$$ then $\lambda_1=x_1(1+a)-ix_2(1-a)$, $\lambda_2=x_1+ix_2$ and $\lambda_3=x_3$. We use the description of $L_3$ to conclude that \begin{equation*} ((x_1^2+x_2^2)|1-a|^2+(x_1^2+x_2^2)|1+a|^2+4x_3^2)/2\leq 1+\left|1/4(x_1+ix_2)^2((1-a)^2-(1+a)^2)+x_3^2\right|^2. \end{equation*} \vspace{0.1 in} Consequently, we get \begin{eqnarray*} (x_1^2+x_2^2)(1+|a|^2)+2x_3^2 &\leq& 1+|a|^2(x_1^2+x_2^2)^2+x_3^4-2\re(a(x_1+ix_2)^2x_3^2) \\ &\leq& 1+(|a|(x_1^2+x_2^2)+x_3^2)^2. \end{eqnarray*} Suppose that for some $r\in (0,1),$ $x_1^2+x_2^2=r^2$ and so $x_3^2=1-r^2$. From the above expression, we have $(1-r^2)(1-|a|)^2\leq 0$ which gives a contradiction.  

Let $|a|=1.$ We lose no generality assuming that $a=1$. Hence, the corresponding holomorphic retract of $L_2\times\{0\}$ is the set $\{(0,-i\lambda,0):\lambda\in\mathbb D\}$ (which corresponds to $\mathbb D\owns\lambda\to(\lambda,\lambda)\in\mathbb D^2$). 
The corresponding linear retractions in $L_2\times\{0\}$ are of the form 
\begin{equation*}
(z_1,z_2,0)\to (0,-it(z_1+iz_2)-i(1-t)(-z_1+iz_2),0).
\end{equation*}
Using analogous argument as above, we choose the vectors $v_1=(-i,2t-1,0)$ and $v_2=(0,-i,0)$ and $v_3=(0,0,1)$. Then we easily get that the holomorphic retract is $\{0\}\times L_2$ which finishes the proof.
\end{proof}

\begin{remark}
Recently in \cite{Mac 2020}, Maciaszek proved that the subsets of $L_3$ with polynomial extension  property are the retracts of $L_3$. Thus Theorem \ref{l3retract2dim} and Proposition \ref{lemonegeo} together with \cite[Theorem 2]{Mac 2020} provide a nice description for (polynomial) extension sets in $L_3.$
\end{remark}

\subsection{Retracts in higher dimensional Lie ball -- a counterexample}\label{counterex}
The method used in the proof of Theorem \ref{l3retract2dim} cannot be used for $L_n,\,\,n\geq 4$ to describe the holomorphic retracts (even of dimension two). We consider $L_{2n},\,\,n\geq 2$ and the holomorphic mapping $R$ defined on $\mathbb C^{2n}$ by 
\begin{equation*}
R(z_1,\ldots,z_{2n}):=\left(\frac{z_1-iz_2}{2},\frac{z_2+iz_1}{2},\frac{z_3-iz_4}{2},\frac{z_4+iz_3}{2},\ldots,\frac{z_{2n-1}-iz_{2n}}{2},\frac{z_{2n}+iz_{2n-1}}{2}\right).
\end{equation*}
Note that $R(L_{2n})\subset L_{2n}$. Actually, let $z=(z_1,\ldots,z_{2n})\in L_{2n}$ then so is $(-iz_2,iz_1,\ldots,-iz_{2n},iz_{2n-1})$. Then we use the convexity of $L_{2n}$ to conclude that $R(z)\in L_{2n}$. Note that $R$ is a retraction for the retract $M=R(L_{2n})=\{(z_1,iz_1,\ldots,z_n,iz_n)\in L_{2n}\}$. And the latter is actually the Euclidean open unit ball of dimension $n$. 

The example above is inspired by the description of all complete totally geodesic complete manifolds in the Lie balls with respect to the Bergman metric in \cite[Theorem 2.3]{Fab 1990}.  Such manifolds are (up to an automorphism of $L_n$) the $L_k\times\{0\}^{n-k}$'s for some $k=1,\ldots,n$ and the sets $M$ as defined above. It would be interesting to see whether we could extend our results to all the Lie balls.

Before we get into the special situation of the tetrablock we make a tour to a general discussion of lifting of holomorphic retracts that is then used to get the form of some of holomorphic retracts.

\section{Lifting of retracts} Let $D$ and $G$ be two domains in $\mb C^n$ and $\pi:D\to G$ be a proper holomorphic mapping of multiplicity $k$. We denote {\it the set of critical arguments} by $\mathcal J(\pi):=\{z:\operatorname{det}\pi^{\prime}(z)=0\},$ where $\pi^{\prime}$ is the complex jacobian matrix of $\pi.$ The set $\pi(\mathcal J(\pi))$ is called {\it the locus set}. 

\begin{proposition}\label{proposition:lifting-retracts} Let $\pi:D\to G$ be as above and let $M\subset G$ be a simply connected retract such that $M\cap \pi(\mathcal J(\pi))=\emptyset$ and $R:G\to M$ be the corresponding retraction. Suppose that $\pi^{-1}(w)=\{z_1,\ldots,z_k\}$ for a fixed $w\in M$. For each $z_j,$ there exists a retract $z_j\in N_j\subset D$ such that $\pi(N_j)=M$ and $\pi_{|N_j}:N_j\to M$ is a biholomorphism.


\end{proposition}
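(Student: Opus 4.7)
The plan is to lift $M$ sheet-by-sheet by showing that $\pi$ restricted over $M$ is a trivial unramified covering of degree $k$, and then to construct each retraction $\tilde R_j:D\to N_j$ by composing $R\circ\pi$ with the local inverse of $\pi$ on the appropriate sheet.

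First I would observe that the hypothesis $M\cap\pi(\mathcal J(\pi))=\emptyset$ forces the complex Jacobian of $\pi$ to be nonvanishing at every point of $\pi^{-1}(M)$. Consequently $\pi$ is a local biholomorphism in a neighborhood of each point of $\pi^{-1}(M)$, so $\pi^{-1}(M)$ is a complex submanifold of $D$ of the same dimension as $M$. Combined with properness, the restriction $\pi_{|\pi^{-1}(M)}:\pi^{-1}(M)\to M$ is a proper local homeomorphism, hence an unramified covering map. Since $\pi$ has multiplicity $k$ and is unramified over $M$, each fiber has exactly $k$ points, so the covering has degree $k$.

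Next I would use the simple connectedness of $M$ to conclude that this covering is trivial: thus $\pi^{-1}(M)=N_1\sqcup\cdots\sqcup N_k$, a disjoint union of open (connected) subsets of $D$, each mapped biholomorphically onto $M$ by $\pi$. Labeling so that $z_j\in N_j$ is possible since $\pi^{-1}(w)=\{z_1,\ldots,z_k\}$ contributes precisely one point to each sheet. Finally, I would set
\[
\tilde R_j:=(\pi_{|N_j})^{-1}\circ R\circ\pi:D\to N_j.
\]
This is a globally holomorphic map on $D$, because $R\circ\pi$ takes $D$ into $M$, where $(\pi_{|N_j})^{-1}$ is holomorphic. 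For $z\in N_j$ we have $\pi(z)\in M$, whence $R(\pi(z))=\pi(z)$ and $(\pi_{|N_j})^{-1}(\pi(z))=z$; therefore $\tilde R_j$ is the identity on $N_j$, so $N_j$ is a holomorphic retract of $D$. Moreover $\pi(N_j)=M$ and $\pi_{|N_j}:N_j\to M$ is the required biholomorphism.

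The main obstacle will be the topological lifting step, that is, turning $\pi_{|\pi^{-1}(M)}$ into a trivial $k$-sheeted covering: one has to combine properness with local biholomorphicity over $M$ to obtain a genuine covering map, and only then can simple connectedness of $M$ be invoked to split $\pi^{-1}(M)$ into exactly $k$ sheets. After this geometric picture is in place, the definition of $\tilde R_j$ and the verification that it restricts to the identity on $N_j$ are essentially formal.
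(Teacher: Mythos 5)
Your proposal is correct and follows essentially the same route as the paper: the paper likewise treats $\pi:\pi^{-1}(M)\to M$ as an unbranched covering, uses simple connectedness of $M$ to lift $\mathrm{id}_M$ to a section $i_j$ with $i_j(w)=z_j$ (which is exactly your $(\pi_{|N_j})^{-1}$), and defines the retraction by $R_j=i_j\circ R\circ\pi$, identical to your $\tilde R_j$. Your extra detail on why the restriction of $\pi$ over $M$ is a genuine covering (local biholomorphicity off $\mathcal J(\pi)$ plus properness) is a useful elaboration of what the paper takes for granted, but it does not change the argument.
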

\begin{proof}
Using the lifting theorem for the (unbranched) covering map $\pi : \pi^{-1}(M)\to M$ and $id_M:M\to M,$ we obtain an injective holomorphic mapping $i_j: M \to \pi^{-1}(M)$ such that $\pi \circ i_j=id_M$, $i_j(w)=z_j$ \cite[p. 143, 4.1. Theorem]{Bredon 1993}. 
Moreover, $i_j(M)$ is a holomorphic retract of $D.$ Actually, the retraction $R_j:D\to i_j(M)$ is given by 
\begin{eqnarray*}
    R_j(z)= i_j \circ R\circ\pi(z),\,\,z\in D.
\end{eqnarray*}  
\end{proof}

\begin{remark}
    If $G$ is simply connected then any retract $M\subseteq G,$ described in Proposition \ref{proposition:lifting-retracts}, is also simply connected.
\end{remark}
     A rather interesting application of Proposition \ref{proposition:lifting-retracts} is to provide a number of non-trivial holomorphic retracts of some special complex ellipsoids. 
    
    \begin{example} Let $p_j$ be positive integers for $j=1,\ldots,n$ and at least one of $p_j\geq 2.$ We define $$\mathcal E(p_1,\ldots,p_n):=\{(z_1,\ldots,z_n) \in \mb C^n: |z_1|^{2p_1}+|z_2|^{2p_2}+\ldots+|z_n|^{2p_n}<1\}$$ and a proper holomorphic map $\pi:\mathcal E(p_1,\ldots,p_n) \to \mb B_n$ by $\pi(z_1,\ldots,z_n)=(z_1^{p_1},\ldots,z_n^{p_n}).$ Note that $\mathcal J(\pi)=\{(z_1,\ldots,z_n) \in \mathcal E(p_1,\ldots,p_n): z_i=0 \text{ for at least one } i \text{ such that } p_i\geq 2\}.$ Any holomorphic retract in $\mathbb B_n$ is the intersection of an affine subspace with $\mathbb B_n$. Fix $k<n.$ We consider a $k$-dimensional affine space $V$ of $\mb C^n$ such that $V\cap\mathbb B_n$ does not intersect $\pi(\mathcal J(\pi))$. Let us define the set $$N:=\{(z_1^{1/p_1},\ldots,z_n^{1/p_n})=:\Phi(z) \,\, \text{ for } z\in V\cap\mathbb B_n\},$$ where the powers are well defined and holomorphic. Then $N$ is a holomorphic retract in $\mathcal E(p_1,\ldots,p_n),$ cf. Proposition \ref{proposition:lifting-retracts}. The retraction $R:\mathcal E(p_1,\ldots,p_n)\to N$ is given by $$R=\Phi\circ r \circ \pi,$$ where $r:\mb B_n \to V\cap\mathbb B_n$ denotes a holomorphic retraction.  Moreover, Proposition \ref{proposition:lifting-retracts} ensures that such retracts in $\mathcal E(p_1,\ldots,p_n)$ are biholomorphic to some $k$-dimensional Euclidean balls. 
\end{example}

\section{Retracts in the tetrablock}
This section is devoted to the description of (two dimensional) holomorphic retracts in the tetrablock.

\subsection{The Tetrablock}
Let $R_{III}(2)$ be the set of all $2\times2$ symmetric complex matrices whose largest singular value is less than $1.$  $R_{III}(2)$ is a bounded domain in $\mb C^3$ in its Harish-Chandra realization. In fact, $R_{III}(2)$ and $L_3$ are holomorphically equivalent and the biholomorphism is given by $\psi: L_3 \to R_{III}(2)$ by  $$\psi(z_1,z_2,z_3)=\begin{bmatrix} z_1+iz_2 & z_3 \\ z_3 & -z_1+iz_2\end{bmatrix}.$$ Moreover, $\psi(L_2\times\{0\})=\mb D^2\times \{0\}$ \cite[Lemma 3(b)]{Coh-Col 1994}. We use Theorem \ref{l3retract2dim} and the above discussion to conclude that all two-dimensional retracts in $R_{III}(2)$ are holomorphically equivalent to $\mb D^2$ (use Theorem~\ref{l3retract2dim} and Proposition~\ref{proposition:lifting-retracts}).

The proper holomorphic map $\Lambda: R_{III}(2) \to \Lambda(R_{III}(2))$ defined by $$\Lambda(A)=(a_{11},a_{22},a_{11}a_{22}-a_{12}^2),\;\,\, A=\begin{bmatrix} a_{11} & a_{12} \\ a_{12} & a_{22}\end{bmatrix}\in R_{III}(2)$$ is of multiplicity $2.$ Note that $\mathcal J(\Lambda)$ is the set of diagonal matrices with both entries from the unit disc. Consequently, $\Lambda(\mathcal J(\Lambda))=\{(a,b,ab):a,b\in\mathbb D\}=:\mathcal R$. The set $\mathcal R$ is sometimes called the {\it royal variety}. Geometrically, it is a bidisc. We refer to the proper image $\Lambda(R_{III}(2)):=\mb E$ by the \emph{tetrablock}. The domain was first introduced in \cite{Abo-You-Whi 2007}. Recall that $\mathbb E$ is a Lempert domain (see \cite{Edi-Kos-Zwo 2013}).

\subsection{Two-dimensional holomorphic retracts of the tetrablock}\label{proof:omitting royal variety}

The following theorem describes the two dimensional holomorphic retracts of the tetrablock (up to biholomorphisms). 

\begin{theorem}\label{theorem:2dimensional-retracts-tetrablock}
    Let $M\subset \mathbb E$ be a two-dimensional holomorphic retract of $\mathbb E$. Then $M$ is biholomorphic to the bidisc or to the symmetrized bidisc. \end{theorem}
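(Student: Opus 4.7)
The plan is to split the argument according to whether $M$ meets the royal variety $\mathcal R = \Lambda(\mathcal J(\Lambda)) = \{(a,b,ab) : a,b \in \mathbb D\}$.

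Assume first that $M \cap \mathcal R = \emptyset$. Since $(ta,tb,t^2 p)\in\mathbb E$ whenever $|t|\le 1$ and $(a,b,p)\in\mathbb E$, the tetrablock is contractible and in particular simply connected. Any retract of a simply connected space is itself simply connected: the relation $R\circ i=\mathrm{id}_M$ for the retraction $R$ and inclusion $i:M\hookrightarrow\mathbb E$ forces $i_\ast$ to be injective on fundamental groups, so $\pi_1(M)=0$. Because $M$ avoids the branch locus $\Lambda(\mathcal J(\Lambda))$, Proposition~\ref{proposition:lifting-retracts} produces a two-dimensional retract $N\subset R_{III}(2)$ with $\Lambda|_N:N\to M$ a biholomorphism. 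Combined with the biholomorphism $\psi:L_3\to R_{III}(2)$, Theorem~\ref{l3retract2dim} identifies $N$ biholomorphically with $L_2\times\{0\}\cong\mathbb D^2$, and hence $M$ is biholomorphic to the bidisc.

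Now suppose $M\cap\mathcal R\ne\emptyset$ and fix $w\in M\cap\mathcal R$. After composing with a suitable automorphism of $\mathbb E$, I would normalize $w$ to a convenient position on the royal variety. The necessary condition for retracts in Lempert domains then forces $T_w M\cap I_{\mathbb E}(w)$ to be a linear retract of the indicatrix $I_{\mathbb E}(w)$. Classifying the two-dimensional linear retracts of $I_{\mathbb E}(w)$ should yield, up to the action of the automorphism group of $\mathbb E$ on $\mathcal R$, exactly two candidate tangent planes: that of $\mathcal R$ itself and that of the symmetrized-bidisc copy $\{(s/2, s/2, p) : (s,p) \in \mathbb G_2\}$. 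Both of these models are themselves retracts of $\mathbb E$, so Theorem~\ref{theorem:uniqueness-retracts} together with Remark~\ref{remark:uniqueness-retracts} identifies $M$ biholomorphically with the corresponding model, giving either the bidisc $\mathbb D^2$ or the symmetrized bidisc $\mathbb G_2$.

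The main obstacle is the classification step in the second case: giving an explicit description of the indicatrix $I_{\mathbb E}(w)$ at a point of the royal variety and ruling out tangent planes other than the two listed above. Since $\mathbb E$ is not balanced in the standard sense, Remark~\ref{remark:reduction-retracts} does not apply directly. Instead, I would exploit Proposition~\ref{proposition:necessary-retraction} together with the known (non)-uniqueness of left inverses for complex geodesics of $\mathbb E$ passing through the royal variety, which is precisely where left-inverse uniqueness can fail; this must be compatible with the form of $R|_M$. A secondary subtlety is checking that the two models $\mathcal R$ and the symmetrized-bidisc copy give genuinely different biholomorphism classes, so that the two alternatives in the conclusion are really distinct—this follows from the fact that $\mathbb G_2$ is not biholomorphic to $\mathbb D^2$.
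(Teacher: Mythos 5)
Your first case (retracts omitting the royal variety) is complete and matches the paper's argument: simple connectivity of $\mathbb E$ (you via contractibility, the paper via starlikeness with respect to the origin), injectivity of $i_\ast$ on fundamental groups, Proposition~\ref{proposition:lifting-retracts}, and Theorem~\ref{l3retract2dim} transported through $\psi$. Your finishing move in the second case --- once the tangent plane at the normalized point is pinned down, play $M$ against the explicit model retracts $\mathcal R$ and $\left\{\left(\frac{s}{2},\frac{s}{2},p\right):(s,p)\in\mathbb G_2\right\}$ via Theorem~\ref{theorem:uniqueness-retracts} and Remark~\ref{remark:uniqueness-retracts} --- is also legitimate, and is arguably a cleaner conclusion than the paper's appeal to the full description of complex geodesics through $0$ from \cite{Edi-Zwo 2009}.

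The gap sits in the middle of the second case. The classification of two-dimensional linear retracts of $I_{\mathbb E}(0)=\{z\in\mathbb C^3:\max\{|z_1|+|z_3|,|z_2|+|z_3|\}<1\}$ does \emph{not} yield ``exactly two candidate tangent planes'': the paper's Lemma~\ref{linret} shows that, up to a linear automorphism, the linear retracts are $\mathbb D^2\times\{0\}$ together with an entire one-parameter family $V_t=\{(z_1,tz_1,z_3):|z_1|+|z_3|<1\}$, $t\in[0,1]$, and establishing even this requires a genuine computation (Lemma~\ref{Lemma estimate}). The planes $V_t$ with $t\in[0,1)$ must then be excluded by a separate argument; you name the right tool (Proposition~\ref{proposition:necessary-retraction} plus left-inverse uniqueness) but do not run it. The paper's execution uses \cite[Theorem 6.3]{Kos-Zwo 2016}: the complex geodesics through $0$ in directions of $V_t$ other than $(0,0,1)$ and $(1,t,0)$ have uniquely determined left inverses $\Psi_{\omega}(z)=\frac{\omega z_3-z_2}{\omega z_1-1}$, every $|\omega|=1$ occurs among these, hence $\Psi_\omega\circ R=\Psi_\omega$ for all $|\omega|=1$, which forces $R=\operatorname{id}$ and gives a contradiction. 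Without both the indicatrix classification and this exclusion, the dichotomy your argument rests on is unsupported, so the second half of your proof is an outline rather than a proof.
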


We divide the proof of Theorem \ref{theorem:2dimensional-retracts-tetrablock} in two parts. We first remark that any holomorphic retract $M$ omitting the royal variety must be of the form $\Lambda(N)$, where $N$ is a two-dimensional holomorphic retract in $R_{III}(2)$ such that $N\cap\mathcal J(\Lambda)=\emptyset.$ Otherwise, it is of the form $\Phi(N),$ where $\Phi$ is an automorphism of $\mathbb E$ and $N$ is a two-dimensional holomorphic retract of one of the forms: $N=\{(a,b,ab):a,b\in\mathbb D\}$ or $N=\left\{\left(\frac{s}{2},\frac{s}{2},p\right):(s,p)\in\mathbb G_2\right\}$.


\begin{proof}[Proof for the retracts omitting the royal variety]
Since the tetrablock is starlike with respect to the origin \cite{Abo-You-Whi 2007}, any retract of the tetrablock is simply connected. Then by Proposition~\ref{proposition:lifting-retracts} any holomorphic retract in $\mathbb E$ omitting the royal variety must be a biholomorphic image (by $\Lambda$) of a retract from $R_{III}(2)$. This finishes the proof of this part of theorem.
\end{proof}

Note that the existence of two-dimensional holomorphic retracts in the tetrablock omitting the royal variety is yet to be proved. However, we provide examples of the two-dimensional retracts in the domain $R_{III}(2)$ omitting $\mathcal J(\Lambda).$ We emphasize that the existence of such retracts in $R_{III}(2)$ is necessary if the retracts in the tetrablock (omitting the royal variety) exist. We construct them below.


\begin{example}
  Recall that for $\lambda\in\mathbb D,$
    \begin{eqnarray*}
        \Psi_{\lambda}(A)=\left(A-\begin{bmatrix} 0 & \lambda  \\  \lambda & 0
\end{bmatrix}\right)\left(\mathbb I_2-\begin{bmatrix} 0 & \overline{\lambda} \\\overline{\lambda} &  0  
        \end{bmatrix} A\right)^{-1}, \,\,\, A\in\mathcal R_{III}(2)
    \end{eqnarray*} is an automorphism of $\mathcal R_{III}(2)$  \cite{Bas 1983}.
    Let $M=\left\{\begin{bmatrix} a & 0 \\
     0 & b \end{bmatrix}: a,b \in \mb D\right\}.$ Elementary calculations show that if $\lambda\neq 0,$  the set $\Psi_{\lambda}(M)$ does not intersect $\mathcal J(\Lambda)$.  
\end{example}


The above lifting of two-dimensional retracts is analogous to the lifting of complex geodesics omitting the royal varieties in the symmetrized bidisc and the tetrablock \cite{Pfl-Zwo 2005,Edi-Kos-Zwo 2013}. In particular, it is proved in \cite{Pfl-Zwo 2005} that the complex geodesics (one dimensional retracts) omitting the royal variety in the symmetrized bidisc are precisely the images under symmetrization map of complex geodesics in $\mathbb D^2$ omitting the diagonal of $\mathbb D^2.$ A similar understanding for the retracts (both one and two dimensional) of the tetrablock will be an interesting direction to pursue. 



 The problem of uniqueness of holomorphic retractions in the polydisc was studied in \cite{Guo-Hua-Wang 2008}. Another direction of exploration would be the uniqueness problem of holomorphic retractions for a holomorphic retract in $\mb E$. This can be thought of a natural generalization of the problem of uniqueness of left inverses to complex geodesics  \cite{Kos-Zwo 2016}. The following would be a natural sufficient condition for the uniqueness of holomorphic retracts. Let $M\subset D$ be a holomorphic retract such that the linear retraction $I_D(z)\cap T_zM\subset I_D(z)$ for some (any) $z\in M$ is unique. Does it follow that $M$ admits the unique retraction?



\subsection{Linear retracts in $I_{\mathbb E}(0)$}
We proved in Subsection \ref{proof:omitting royal variety} that the retracts in $\mathbb E$ omitting the royal variety are bidiscs (up to biholomorphisms). Here our aim is to determine all two-dimensional retracts of $\mb E$ intersecting the royal variety. In order to do that, we reduce our consideration to retracts passing through the origin by composing with a suitable automorphism of $\mb E$ \cite{You 2008}. Then we proceed as follows. First we recall the explicit description of the indicatrix of the tetrablock at $0$ from \cite{Abo-You-Whi 2007}. For $z=(z_1,z_2,z_3)\in\mathbb C^3,$ we have $\kappa_{\mathbb E}(0;z)=\max\{|z_1|+|z_3|,|z_2|+|z_3|\}$. The indicatrix of $\mb E$ at the origin is given by $$I_{\mathbb E}(0)=\{z\in\mb C^3:\kappa_{\mathbb E}(0;z)< 1\}.$$ A description of the linear retracts in the indicatrix $I_{\mathbb E}(0)$ is provided in Lemma \ref{linret}. We use it along with the description of complex geodesics passing through $0$ to understand the form of possible retracts, compare also Remark~\ref{remark:form-retracts}. Although we lack the uniqueness property of complex geodesics here, the complete description of complex geodesics in the tetrablock passing through $0$ in \cite{Edi-Zwo 2009} turns out to be a rather useful tool in our considerations.

We now provide a complete description of linear retracts in $I_{\mathbb E}(0)$. We start with the following technical lemma.

\begin{lemma}\label{Lemma estimate} Fix $\alpha,\beta\in\mathbb C$. Then
\begin{align}
|1+\alpha(\lambda-1)|\leq 1 \text{ for all $|\lambda|\leq 1,$}\text{ if and only if $\alpha\in[0,1]$ and} \label{equation-1}\\
|1+\alpha(\lambda-1)|+|\beta(\lambda-1)|\leq 1 \text{ for all $|\lambda|\leq 1,$}\text{ if and only if $\alpha\in[0,1]$, $\beta=0$.}\label{equation-2}
\end{align}
\end{lemma}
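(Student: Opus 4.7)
The plan is to treat the two equivalences separately, using the conclusion of the first inside the second.

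For \eqref{equation-1}, I would rewrite $1+\alpha(\lambda-1)=(1-\alpha)+\alpha\lambda$ as an affine map of $\lambda$. Its image on $\overline{\mathbb D}$ is the closed disc of centre $1-\alpha$ and radius $|\alpha|$, so the bound $|1+\alpha(\lambda-1)|\le 1$ for every $|\lambda|\le 1$ is equivalent to the inclusion of this image disc in $\overline{\mathbb D}$, i.e.\ to $|1-\alpha|+|\alpha|\le 1$. The triangle inequality gives $|1-\alpha|+|\alpha|\ge |(1-\alpha)+\alpha|=1$, so we must have equality; and equality in the triangle inequality forces $1-\alpha$ and $\alpha$ to be non-negative scalar multiples of one another, which is precisely $\alpha\in[0,1]$. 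The converse is trivial.

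For \eqref{equation-2}, part \eqref{equation-1} already forces $\alpha\in[0,1]$, so the remaining task is to show $\beta=0$. My approach is an asymptotic comparison on the unit circle. With $\alpha\in\mathbb R$ and $\lambda=e^{i\theta}$, a direct expansion yields
\begin{equation*}
|1+\alpha(e^{i\theta}-1)|^2 = 1-2\alpha(1-\alpha)(1-\cos\theta),
\end{equation*}
so the first summand stays within $O(\theta^2)$ of $1$, whereas $|\beta(e^{i\theta}-1)| = 2|\beta|\,|\sin(\theta/2)|$ grows like $|\beta|\,|\theta|$. For $\alpha\in(0,1)$ the linear term in the second summand dominates, so the sum strictly exceeds $1$ for small $\theta\ne 0$ unless $\beta=0$. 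The endpoint cases are handled by inspection: if $\alpha=0$ the first term is identically $1$, and setting $\lambda=-1$ gives $2|\beta|\le 0$; if $\alpha=1$ the first term equals $|\lambda|$, which equals $1$ on the circle, so any $\lambda=e^{i\theta}\ne 1$ again forces $\beta=0$. The converse direction reduces immediately to \eqref{equation-1}.

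I do not expect a genuine obstacle. The only delicate point worth emphasising is the mismatch between the quadratic behaviour of $|1+\alpha(e^{i\theta}-1)|-1$ and the linear behaviour of $|\beta(e^{i\theta}-1)|$ near $\theta=0$; this is exactly why the decisive test region is the boundary of $\mathbb D$ rather than its interior, since interior points of the disc on their own would not rule out $\beta\ne 0$.
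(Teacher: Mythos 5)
Your proof is correct and, in substance, matches the paper's: the decisive step in both arguments is to test the second inequality near $\lambda=1$ on the unit circle, where $|1+\alpha(\lambda-1)|$ falls below $1$ only to second order while $|\beta(\lambda-1)|$ is first order (the paper packages this by dividing the squared inequality by $|\lambda-1|$ and letting $\lambda\to1$ so that $\Re\frac{\lambda-1}{|\lambda-1|}\to 0$, which is exactly your tangential/boundary expansion). Your treatment of \eqref{equation-1} via the image disc $\{(1-\alpha)+\alpha\lambda:|\lambda|\le1\}$ and the equality case of the triangle inequality is a slightly different, but equally elementary, route to the same conclusion as the paper's direct computation.
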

\begin{proof}
    The inequality $|1+\alpha(\lambda-1)|\leq 1$ is equivalent to
    \begin{eqnarray*}
2\re(\alpha(\lambda-1))+|\alpha|^2|\lambda-1|^2&\leq & 0,\\
2\re\left(\alpha\frac{1}{\overline{\lambda}-1}\right)+|\alpha|^2&\leq & 0.
    \end{eqnarray*}
   The above property implies that $\alpha\geq 0$. We put $\lambda=-1$ to conclude $\alpha\leq 1$. Conversely, it is straightforward to verify that the inequality holds for $\alpha\in[0,1]$. This finishes the proof of Equation \eqref{equation-1}.
    
    To verify Equation \eqref{equation-2}, we first observe that $\alpha\in[0,1]$. It is easy to see that $|\beta|\leq 1/2$. Under these assumptions, the inequality is equivalent to
    \begin{eqnarray*}
        (\alpha^2-|\beta|^2)|\lambda-1|+2\alpha\re\frac{\lambda-1}{|\lambda-1|}+2|\beta|\leq 0, \;\,\,\, |\lambda|\leq 1, \lambda\neq 1.
    \end{eqnarray*}
    Taking $\lambda\to 1,$ we get $|\beta|\leq 0.$
\end{proof}

For notational simplicity, we denote $K:=\overline{I_{\mathbb E}(0)}=\{z\in\mb C^3:\kappa_{\mathbb E}(0;z)\leq 1\}$.
We use the above description to deliver an ad hoc proof of the description of all two dimensional linear retracts in $K$. Though formally we go beyond the class of domains when considering linear retracts the understanding of linear retracts in the compact set $K$ should be evident.

\begin{lemma}\label{linret}
    Let $V$ be a two dimensional linear subspace of $\mathbb C^3$ and $R:K\to V\cap K$ be a linear retraction. Then $V$ is either $\mathbb C^2\times\{0\}$ or  ${\rm span}_{\mb C}\{(0,0,1),(1,\alpha,0)\}$ or ${\rm span}_{\mb C}\{(0,0,1),(\alpha,1,0)\}$ for some $|\alpha|\leq 1.$
\end{lemma}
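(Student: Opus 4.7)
The plan is to split the analysis of $V$ according to whether it contains $(0,0,1)$, and, in the nontrivial case, to apply Lemma~\ref{Lemma estimate} along a carefully chosen holomorphic disc in $K$.

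First, if $V \ni (0,0,1)$, write $V = \mathrm{span}\{(0,0,1), v\}$ and subtract a suitable multiple of $(0,0,1)$ to arrange $v = (a,b,0)$ with $(a,b) \neq (0,0)$. Normalizing $v$ by whichever of $a$ or $b$ has larger modulus places $V$ directly into one of the two claimed forms with $|\alpha| \leq 1$. No use of the retraction is required here.

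If $V \not\ni (0,0,1)$, then $\pi_{12}|_V \colon V \to \mathbb{C}^2$ is injective and, by dimension, a bijection; so $V = \{(z_1,z_2,\,c_1 z_1 + c_2 z_2) : (z_1,z_2) \in \mathbb{C}^2\}$ for some $c_1,c_2 \in \mathbb{C}$, and the claim becomes $c_1 = c_2 = 0$. Any linear retraction onto such a $V$ has the form
\[
R(z) = \bigl(z_1 + aw,\; z_2 + bw,\; (c_1 z_1 + c_2 z_2) + dw\bigr), \qquad w(z) := z_3 - c_1 z_1 - c_2 z_2,
\]
with free $a,b \in \mathbb{C}$ and $d := ac_1 + bc_2$. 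Since $R(0,0,1) = (a,b,d)$, membership of this image in $K$ yields $\max(|a|,|b|) + |d| \leq 1$. Consequently $d = 1$ would force $a = b = 0$ and hence $d = 0$, a contradiction; it is thus enough to show $d = 1$ whenever $(c_1, c_2) \neq (0,0)$.

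Assuming $(c_1, c_2) \neq (0,0)$, the swap $z_1 \leftrightarrow z_2$ preserves $K$ and sends $V$ to the graph of $(c_2, c_1)$, so we may assume $|c_1| \leq |c_2|$ (in particular $c_2 \neq 0$). Consider the holomorphic disc $\gamma(\lambda) := (1, g(\lambda), 0) \in K$, where
\[
g(\lambda) := \frac{r(\lambda - 1) - c_1}{c_2}, \qquad r := \begin{cases} -c_1, & c_1 \neq 0, \\ c_2/2, & c_1 = 0. \end{cases}
\]
An elementary check using $|c_1| \leq |c_2|$ shows $|g(\lambda)| \leq 1$ on $\overline{\mathbb{D}}$, and $r \neq 0$ in both cases. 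A short direct computation then gives $w(\gamma(\lambda)) = -r(\lambda - 1)$, whence $(R\gamma)_1 = 1 - ar(\lambda - 1)$ and $(R\gamma)_3 = r(1-d)(\lambda - 1)$. The requirement $R(\gamma(\lambda)) \in K$ implies in particular
\[
\bigl|1 + (-ar)(\lambda - 1)\bigr| + |r|\cdot|1-d|\cdot|\lambda - 1| \leq 1 \qquad \text{for all } |\lambda| \leq 1,
\]
and Equation~\eqref{equation-2} of Lemma~\ref{Lemma estimate} applied to this inequality forces $|r|\cdot|1-d| = 0$; hence $d = 1$ since $r \neq 0$, which yields the contradiction. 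The principal obstacle is selecting $\gamma$ so that $(R\gamma)_1$ and $(R\gamma)_3$ fit precisely the form of Lemma~\ref{Lemma estimate}: the unified affine parametrization of $g$ by $r$ accomplishes this and covers both $c_1 \neq 0$ and $c_1 = 0$ simultaneously.
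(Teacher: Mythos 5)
Your proof is correct, and it reaches the conclusion by a genuinely different route than the paper, even though both arguments ultimately hinge on the same key estimate, namely the ``only if'' direction of \eqref{equation-2} in Lemma~\ref{Lemma estimate}. The paper assumes $V\neq\mathbb C^2\times\{0\}$, takes the spanning vector $(1,t,0)$ of $V\cap(\mathbb C^2\times\{0\})$, expands $R(\lambda_1,\lambda_2,0)$ in terms of $R(1,0,0)$ and $R(0,1,0)$ and applies Lemma~\ref{Lemma estimate} to show that $R$ maps $\mathbb C^2\times\{0\}$ into itself, and then uses a vector of $V\cap(\{0\}\times\mathbb C^2)$ together with $R(0,0,1)\in K$ to force $R(0,0,1)=(0,0,1)$, hence $(0,0,1)\in V$. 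You instead isolate the real content of the lemma: when $(0,0,1)\in V$ the claimed form is automatic and the retraction plays no role, so everything reduces to showing that $(0,0,1)\notin V$ forces $V=\mathbb C^2\times\{0\}$. Writing $V$ as a graph over $\mathbb C^2$ and parametrizing \emph{all} linear projections onto $V$ by the two free parameters $a,b$ (with $d=ac_1+bc_2$) lets you derive the contradiction from just two membership constraints, $R(0,0,1)\in K$ and $R(\gamma(\overline{\mathbb D}))\subset K$ for a single disc $\gamma$; the unified choice of $r$ covering both $c_1\neq0$ and $c_1=0$ is a nice touch. I verified the computations ($\gamma(\lambda)\in K$ in both cases, $w(\gamma(\lambda))=-r(\lambda-1)$, and the implication $d=1\Rightarrow a=b=0\Rightarrow d=0$), and the reduction by the swap $z_1\leftrightarrow z_2$ is legitimate since it is a linear automorphism of $K$. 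What your argument buys is brevity and transparency about which part of the statement actually uses the retraction; what the paper's version buys is the extra by-product that the retraction itself fixes $(0,0,1)$ and preserves $\mathbb C^2\times\{0\}$, information that is convenient when the associated retractions are listed in the remark that follows.
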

\begin{proof}
 Assume that the two dimensional linear subspace $V$ is not $\mathbb C^2\times\{0\}$. Then $V\cap (\mathbb C^2\times\{0\})$ is one-dimensional. Let it be spanned by the vector $(1,\alpha,0)$ (or $(\alpha,1,0)$). Without loss of generality, we assume that it is spanned by $(1,t,0)$ for some $t\in[0,1]$. Since $R:K \to V\cap K$ is a linear retraction, we get $R(1,t,0)=(1,t,0)$.

Let $P_{(1,2)},P_3:K\to K$ denote the projection maps defined by $P_{(1,2)}(x_1,x_2,x_3)=(x_1,x_2,0)$ and $P_3(x_1,x_2,x_3)=(0,0,x_3).$ Let $R_{(1,2)}=P_{(1,2)}R$ and $R_3=P_3R.$ We claim that the mapping $R_{(1,2)}: \mb \overline{\mathbb D}^2\times\{0\} \to K$ defined by $(\lambda_1,\lambda_2,0)\mapsto P_{(1,2)}R(\lambda_1,\lambda_2,0)$ is a linear one dimensional retraction. We prove it by showing that $R_3(\lambda_1,\lambda_2,0)=(0,0,0)$ for all $\lambda_1,\lambda_2\in \mb D$. However,
\begin{eqnarray*}
     R(\lambda_1,\lambda_2,0) &=& R(1,t,0) + R(\lambda_1-1,\lambda_2-t,0)  \\&=&(1,t,0)+ (\lambda_1-1)(\alpha_1,\alpha_2,\alpha_3) + (\lambda_2-t)(\beta_1,\beta_2,\beta_3),
\end{eqnarray*}
where $(\alpha_1,\alpha_2,\alpha_3)=R(1,0,0)$ and $(\beta_1,\beta_2,\beta_3)=R(0,1,0)$.
Since $R(\lambda_1,\lambda_2,0)\in K,$ from Lemma \ref{Lemma estimate} we have  that $\alpha_1\in[0,1]$ and $\alpha_3=0$  (taking $\lambda_2=t$) and $\beta_1\in [0,1]$ (taking $\lambda_1=1$). 

Now if $t=1$ then (taking $\lambda_1=1$) $\beta_3=0$ from Lemma \ref{Lemma estimate} and if $0\leq t<1$ then we get in an elementary way that $|1+\beta_1(\lambda-1)|+|\beta_3||\lambda-1|\leq 1$ for all $|\lambda|\leq 1.$ This concludes that $\beta_3=0$. This finishes the proof of our claim. Also, $R_{(1,2)}=R|_{\mb D^2\times\{0\}}.$ So $R_{(1,2)}: \mb D^2\times\{0\} \to \mb D^2\times\{0\}$ is a one-dimensional linear retraction. Additionally, we know that $R(\mathbb C^2\times\{0\})=\mathbb C^2\times\{0\}$.

To finish the proof we show that $R(0,0,1)=(0,0,1)$. To see it, we consider a non-zero vector $v:=(0,\gamma_2,\gamma_3)\in V\cap (\{0\}\times\mathbb C^2).$ Clearly, $\gamma_3\neq 0$. We get
\begin{equation*}
    (0,\gamma_2,\gamma_3)=R(0,\gamma_2,\gamma_3)=\gamma_2R(0,1,0)+\gamma_3R(0,0,1).
\end{equation*}
Since $R(0,1,0)\in\mathbb C^2\times\{0\}$ and $R(0,0,1)\in K$ we get that $R(0,0,1)=(0,0,1).$ \end{proof}

\begin{remark} As a consequence of Lemma \ref{linret}, we obtain that the linear retracts of $I_{\mathbb E}(0)$ are of the following form (up to a linear automorphism of $\mathbb E$).
\begin{enumerate}
    \item[1.] $\mathbb D^2\times\{0\}$ and
    \item[2.] $V_t=\{(z_1,t z_1,z_3):|z_1|+|z_3|<1\}$ for some $t\in[0,1].$
\end{enumerate}
Moreover, the associated linear retractions of $I_{\mathbb E}(0)$ can be given by 

\begin{itemize}
    \item[1.] $R:I_{\mathbb E}(0) \to \mathbb D^2\times\{0\}$ such that $R(z_1,z_2,z_3)=(z_1,z_2,0)$ or 
    \item[2.] for some $t\in[0,1]$, $R_t:I_{\mathbb E}(0) \to V_t$ such that $R_t(z_1,z_2,z_3)= (z_1,t z_1,z_3).$
    \end{itemize}
\end{remark}

\subsection{Proof of Theorem~\ref{theorem:2dimensional-retracts-tetrablock}}

Here we present the proof of Theorem~\ref{theorem:2dimensional-retracts-tetrablock} for the retracts $M$ of $\mb E$ which intersect the royal variety $\m R$. Without loss of generality, we assume $0\in M$. The above discussion yields that $T_0M\cap I_{\mb E}(0)$ is either $\mathbb D^2\times\{0\}$ or $V_t$, $t\in [0,1]$.
We employ the method of recovering the necessary form of the holomorphic retract from that of the linear retract of $I_{\mb E}(0)$ as described in Remark~\ref{remark:form-retracts}. 

First we consider $T_0M\cap I_{\mb E}(0)=V_t$ or $V_t$ is a linear retract of $I_{\mathbb E}(0)$, where $t\in[0,1)$, and we show that it is not possible. Let $R$ be the corresponding retraction of $M$. Then $d_0R$ is the linear retraction of $I_{\mathbb E}(0)$ to $V_t$ given by the formula $d_0R(\lambda,\lambda,\mu)=(\lambda,t\lambda,\mu)$, $|\lambda|+|\mu|<1$. 

Recall that the left inverses in $\mathbb E$ to complex geodesics passing through the origin in directions from $V_t$ are uniquely determined with the exception of directions $(0,0,1)$ and $(1,t,0)$ \cite[Theorem 6.3]{Kos-Zwo 2016}. Moreover, the (uniquely) determined left inverses are of the form $\Psi_{\omega}(z):=\frac{\omega z_3-z_2}{\omega z_1-1}$, $z\in\mathbb E$. Additionally, for any $|\omega|=1$ there is a direction as above for which the geodesic in the direction has the left inverse $\Psi_{\omega}$. By Proposition~\ref{proposition:necessary-retraction}, we get that $\Psi_{\omega}\circ R(z)=\Psi_{\omega}(z)$, $z\in\mathbb E$, $|\omega|=1$, which easily implies that $R_j(z)=z_j$, $j=1,2,3$, $z\in\mathbb E$ and this gives a contradiction.

If the linear retract of $I_{\mathbb E}(0)$ corresponding to a retraction $R$ of $M$  is $\mathbb D^2\times\{0\},$ we use the description of all complex geodesics in $\mathbb E$ passing through $0$ from \cite{Edi-Zwo 2009} to conclude that $M\subset\mathcal R.$ Then the equality must hold and the map $z\to (z_1,z_2,z_1z_2)$ is a retraction of $\mathcal R$.

If the linear retract of $I_{\mathbb E}(0)$ is $V_1,$ we get by the description of complex geodesics passing through $0$ that $M$ must equal is $\{(s/2,s/2,p): (s,p)\in\mathbb G_2\}$.
Then a retraction of $\mathbb E$ is given by \begin{eqnarray*}\label{retalpha}R(z_1,z_2,z_3) = \left(\frac{z_1+z_2}{2},\frac{z_1+ z_2}{2},z_3\right).\end{eqnarray*} 

\subsection{Concluding remarks}
The problem of searching for all holomorphic retracts is somewhat difficult. However, in general situation a less general problem may be considered. Which domains $G\subset \mathbb C^m$ may be retracts of the given domain $D$ in the following sense. There exist holomorphic mappings $r:D\to G$, $R:G\to D$ such that $R\circ r$ is the identity. In the case of $D$ being a Lempert domain the only one-dimensional retract is the unit disc. In the case of the unit ball $\mathbb B_n$ only lower dimensional balls $\mathbb B_m,\,\,m\leq n,$ are its retracts. Similarly the only retracts of $\mathbb D^n$ are the lower dimensional polydiscs $\mathbb D^m,\,\,m\leq n$.

Our considerations led us to formulating that the only two-dimensional retracts in the Lie ball $L_3$ is the bidisc whereas in the case of the tetrablock there are two retratcs: the bidisc and the symmetrized bidisc. It seems to be a highly non-trivial task to get a classification in a much more general situation.



\end{document}